\crefname{hypothesis}{Hypothesis}{Hypotheses}
\title{The \paaa Algorithm for Data driven modeling of Parametric Dynamical Systems\thanks{Submitted to the editors DATE.
\funding{This work was supported in parts by National Science Foundation under Grant No. DMS-1720257 and DMS-1819110.
Part of this material is  based upon work supported by the National Science Foundation under Grant No. DMS-1439786 and by the Simons Foundation Grant No. 50736 while Gugercin was in residence at the Institute for Computational and Experimental Research in Mathematics in Providence, RI, during the ``Model and dimension reduction in uncertain and dynamic systems" program.
}}}
\author{Andrea Carracedo Rodriguez\thanks{Department of Mathematics, Virginia Tech, Blacksburg VA 24061, USA
  (\email{crandrea@vt.edu}).} \and Linus Balicki\thanks{Department of Mathematics, Virginia Tech, Blacksburg VA 24061, USA
  (\email{balicki@vt.edu}).}
\and Serkan Gugercin$^\dagger$\thanks{Department of Mathematics and
Computational Modeling and Data Analytics Division, Academy of Data Science, VA 24061, USA 
  (\email{gugercin@vt.edu}).}}
\newcommand*{\addFileDependency}[1]{% argument=file name and extension
  \typeout{(#1)}% latexmk will find this if $recorder=0 (however, in that case, it will ignore #1 if it is a .aux or .pdf file etc and it exists! if it doesn't exist, it will appear in the list of dependents regardless)
  \@addtofilelist{#1}% if you want it to appear in \listfiles, not really necessary and latexmk doesn't use this
  \IfFileExists{#1}{}{\typeout{No file #1.}}% latexmk will find this message if #1 doesn't exist (yet)
}
\newcommand{\eqdef}{\xlongequal{\text{def}}}%
\newcommand{\vf}{\textsf{VF}\xspace}
\newcommand{\aaa}{\textsf{AAA}\xspace}
\newcommand{\paaa}{\textsf{p-AAA}\xspace}
\newcommand{\mvpaaa}{\textsf{MIMO p-AAA}\xspace}
\newcommand{\bA}{\textbf{A}}
\newcommand{\bB}{\textbf{B}}
\newcommand{\bD}{\textbf{D}}
\newcommand{\bG}{\textbf{G}}
\newcommand{\bH}{\textbf{H}}
\newcommand{\bI}{\textbf{I}}
\newcommand{\bK}{\textbf{K}}
\newcommand{\bM}{\textbf{M}}
\newcommand{\bN}{\textbf{N}}
\newcommand{\ba}{\textbf{a}}
\newcommand{\bb}{\textbf{b}}
\newcommand{\bc}{\textbf{c}}
\newcommand{\be}{\textbf{e}}
\newcommand{\bff}{\textbf{f}}
\newcommand{\bg}{\textbf{g}}
\newcommand{\bh}{\textbf{h}}
\newcommand{\bu}{\textbf{u}}
\newcommand{\bv}{\textbf{v}}
\newcommand{\bx}{\textbf{x}}
\newcommand{\bw}{\textbf{w}}
\newcommand{\bJ}{\textbf{J}}
\newcommand{\bU}{\textbf{U}}
\newcommand{\bV}{\textbf{V}}
\newcommand{\hbb}{\hat{\bb}}
\newcommand{\hbc}{\hat{\bc}}
\newcommand{\hbA}{\hat{\bA}}
\newcommand{\hbE}{\hat{\textbf{E}}}
\newcommand{\Hr}{\widetilde{H}}
\newcommand{\bHr}{\widetilde{\textbf{H}}}
\newcommand{\ds}{\displaystyle}
\newcommand{\R}{\mathbb{R}}
\newcommand{\C}{\mathbb{C}}
\newcommand{\rk}{\operatorname{rank}}
\renewcommand{\Re}{\operatorname{Re}}
\renewcommand{\Im}{\operatorname{Im}}
\newcommand{\gh}{\hat{g}}
\newcommand{\bgh}{\hat{\bg}}
\newcommand{\s}{\sigma}
\newcommand{\bs}{\boldsymbol{\s}}
\newcommand{\bsh}{\hat{\bs}}
\newcommand{\sh}{\hat{\sigma}}
\newcommand{\p}{\pi} 
\newcommand{\ph}{\hat{\pi}} 
\newcommand{\boldp}{\boldsymbol{\p}}
\newcommand{\bph}{\boldsymbol{\ph}}
\newcommand{\beps}{\boldsymbol{\varepsilon}}
\newcommand{\phat}{z}
\newcommand{\z}{\zeta}
\newcommand{\zh}{\hat{\zeta}}
\newcommand{\bzeta}{\boldsymbol{\z}}
\newcommand{\bzetah}{\hat{\bzeta}}
\newcommand{\ih}{\hat{\imath}}
\newcommand{\jh}{\hat{\jmath}}
\newcommand{\F}{\bD_{\bs\boldp}} 
\newcommand{\Fhs}{\bD_{\bs\bph}} 
\newcommand{\Fhp}{\bD_{\bsh\boldp}} 
\newcommand{\Fh}{\bD_{\bsh\bph}}
\newcommand{\Loew}{\mathbb{L}}
\newcommand{\Loewhs}{\Loew_{\bs\bph}}
\newcommand{\Loewhp}{\Loew_{\bsh\boldp}}
\newcommand{\Loewh}{\Loew_{\bsh\bph}}
\newcommand{\nin}{n_{{in}}}
\newcommand{\nout}{n_{{out}}}
\newcommand{ \oldu}{f}
\newcommand{\Dpart}{
    \left[ \begin{array}{l|c}
	\F & \Fhs \\\hline
	\Fhp & \Fh
    \end{array} \right]}
\def\bign#1{\mathclose{\hbox{$\left#1\vbox to8.5\p@{}\right.\n@space$}}\mathopen{}}
\def\Bign#1{\mathclose{\hbox{$\left#1\vbox to11.5\p@{}\right.\n@space$}}\mathopen{}}
\def\biggn#1{\mathclose{\hbox{$\left#1\vbox to14.5\p@{}\right.\n@space$}}\mathopen{}}
\def\Biggn#1{\mathclose{\hbox{$\left#1\vbox to17.5\p@{}\right.\n@space$}}\mathopen{}}
\DeclareMathOperator*{\argmax}{arg\,max}
\DeclareMathOperator*{\argmin}{arg\,min}
\def\cool#1{\textcolor{blue}{#1}}
\begin{document}

\maketitle

% REQUIRED
\begin{abstract}
    The \aaa algorithm has become a popular tool for  data-driven rational approximation of single variable functions, 
    such as transfer functions of linear dynamical systems. 
    In the setting of parametric dynamical systems 
    appearing in many prominent applications, 
    the underlying (transfer) function to be modeled is a multivariate function. With this in mind, 
    we develop the \aaa framework for approximating multivariate functions where the approximant is constructed in the multivariate barycentric form. 
    The method is data-driven, in the sense that it  does not require access to the full state-space model and requires only function evaluations. We discuss an extension to the case of matrix-valued functions, i.e., multi-input/multi-output dynamical systems, and provide a connection to the tangential interpolation theory. Several numerical examples illustrate the effectiveness of the proposed approach.
    \end{abstract}

% REQUIRED
\begin{keywords}
  Rational approximation, parametric systems, dynamical systems, interpolation, least-squares, transfer functions
\end{keywords}

% REQUIRED
\begin{AMS}
  35B30,37M99, 41A20, 35B30, 65K99, 93A15, 93B15
\end{AMS}

%=========================================================
%=========================================================
%=========================================================

\section{Introduction}
\label{sec:intro}
Many physical phenomena can be modeled as dynamical systems whose dynamics depend on one or several parameter values. These parameters
might represent material properties, boundary conditions, system geometry, etc.  
As an example, 
consider an input-output system governed by a system of {linear} ordinary differential equations  (can be viewed as a semi-discretized time-dependent PDE)
    \begin{equation} \label{paramlti}
    \dot{\bx} (t,p) = \bA (p) \bx (t,p) + \bb \oldu(t);
    \qquad
    y (t,p) = \bc^\top \bx (t,p),
    \end{equation}
where $p \in \mathcal{P} \subset \R$ 
represents the parametric variation in $\bA(p) \in \R^{\rho \times \rho}$;
$\bb,\bc \in \R^\rho$ are constant; 
$\oldu(t) \in \R$ is the input (forcing term); $y(t,p) \in \R$ is the output (quantity of interest); and $\bx(t,p)
\in \R^\rho$ 
is the state (internal degrees of freedom). 
Assuming zero initial conditions, i.e., $\bx(0) = \mathbf{0}$, the output $y(t,p)$ can be expressed using the convolution integral
    \begin{equation}  \label{paramconv}
    y (t,p) = 
    \int_0^t 
    \bc^\top e^{(t-\tau) \bA (p)} \bb \oldu(\tau) 
    ~ d\tau.
    \end{equation}
When the system dimension, $\rho$, is large,  evaluating the quantity of interest $y(t,p)$
repeatedly for different parameter values becomes computationally demanding. One remedy to this problem is to find a surrogate model of much smaller dimension, i.e., a reduced dynamical system, so that re-evaluations of the system are significantly cheaper yet accurately captures 
$y(t,p)$.
This is the goal of parametric model order reduction (PMoR).
Projection-based PMoR methods have been successfully developed for systems with known internal description as in \eqref{paramlti}, i.e., the full-order operators $\bA(p),\bb$ and $\bc$ are
available; see, e.g., the recent survey papers and books \cite{BenCOW17,QuaMN16,hesthaven2016certified,AntBG20} for a detailed analysis of projection-based approaches to PMoR.
However, in many cases {the} internal description of a system is not accessible
and 
only input/output measurements  are available. 
In our setting,
for  parametric dynamical 
systems such as \eqref{paramlti}, input/output measurements/data will correspond to the samples of \emph{the  transfer function} of \eqref{paramlti}, i.e., the samples of  
    \begin{equation} \label{Hinss}
    H (s,p) = \bc^\top (s\bI - \bA (p))^{-1} \bb,
    \end{equation}
    where $H (s,p)$ is the Laplace transform of the convolution kernel $h(t) = \bc^\top e^{t \bA(p)}\bb$ in
    \eqref{paramconv}.
Then, given the samples $\{H (s_i,p_j)\}$,  our goal is to build a function that approximates this data in an appropriate measure. Even though our motivation comes from approximating parametric dynamical systems,  similar approximation problems can also arise in modeling stationary PDEs, such as 
    \[
    u_{xx} + p u_{yy} + zu = f (x,y) 
    \qquad \textup{on} \qquad \Omega = [a,b]\times [c,d],
    \]
    with appropriately defined initial and boundary conditions.  A spatial discretization on $\Omega$, yields
    \[
    \bA (p,z) \bu = \bb.
    \]
Then, the samples of the function $ H (p,z) = \bA(p,z)^{-1} \bb$ can be used to build an approximation to the solution $u(x,y)$. We visit two such problems in \Cref{ex:jiahua}. {Assume, for the moment, that
$\bA(p)$ in~\eqref{Hinss} has an affine dependence on $p$, e.g., $\bA(p) = \bA_0 + p \bA_1$ where $\bA_0$ and $\bA_1$ are constant matrices. Then, both  $H(s,p)$  and $H(p,z)$ defined above
are two-variable rational functions. That is, $H(s,p)$ (and similarly $H(p,z)$) can be expressed as a ratio of two-variable polynomials
    \[
    H(s,p) = \frac{\sum_{i=0}^k\sum_{j=0}^{q} \tilde{\beta}_{ij}s^ip^j}{\sum_{i=0}^k\sum_{j=0}^{q} \tilde{\alpha}_{ij}s^ip^j}, \quad \tilde{\alpha}_{kq} \neq 0 \text{ or } \tilde{\beta}_{kq} \neq 0.
    \]
We refer to the tuple $\left( k, q \right)$ as the \textit{order} of $H(s,p)$. Further, we call $H(s,p)$ proper if $\tilde{\alpha}_{kq} \neq 0$ and $\tilde{\beta}_{kq} \neq 0$ and strictly proper if $\tilde{\alpha}_{kq} \neq 0$ and $\tilde{\beta}_{kq} = 0$. Even though in our approach below we do not require $H(s,p)$ to be a two-variable rational function in $(s,p)$ (and thus, we do not require
$\bA(p)$ to have an affine dependence on $p$), this form motivates us to enforce a rational form in the approximant (as done in the classical rational approximation of single-variable functions).}

Consider a scalar-valued function $ H (s,p) $ of two variables and
assume we only have access to its samples:
\[
    H (s_i,p_j) \in \C \qquad
   \mbox{for}\quad i = 1, \dots, N ~~\mbox{and}~~
    j = 1, \dots, M.
\]
{We assume that the sampling points are given and fixed, i.e.,  we are not investigating how to pick $s_i$ and $p_j$.}
Our goal is, then, to find a {two-variable} rational function $ \Hr (s,p) $ that is a \emph{good} approximation of $ H (s,p)$. We will specify later how we {evaluate the quality of our approximation}.
Even though our motivation is that $ H(s,p) $  represents the transfer function of a parametric dynamical system
and we consider the variable $s$ as frequency and $p$ as the parameter, this is  not restrictive and the approach can be considered as rational approximation of a multivariate function from its samples. {Additionally, since the proposed method will be purely based on function ($H(s,p)$) samples, there are no restrictions on the type of parameter dependence in the system to approximate. Moreover, the parameter dependence can appear in other system matrices besides $\bA(p)$. }
In order to make the derivations clear, 
we first review, in \Cref{sec:aaa1}, three of the existing algorithms for  data-driven rational approximation in the single variable case: 
the Loewner framework \cite{anderson90rational,An86scalar}, the vector fitting method \cite{Gu99rational}, and the \aaa algorithm \cite{Na18AAA}.  We highlight the similarities and differences among these three approaches.  In \Cref{sec:propmethod}, we present the proposed method, the parametric \aaa algorithm (\paaa),  for data-driven modeling of parametric dynamical systems,
which  extends the \aaa algorithm \cite{Na18AAA}  to the multivariate case.
In \Cref{subsec:mimoAAAp} we show how to apply the proposed  methodology to matrix-valued functions.
Throughout  \Cref{sec:propmethod} and \Cref{subsec:mimoAAAp}, we
use various examples to illustrate the success of the new methodology.

\section{Revisiting the single variable problem}

\label{sec:aaa1}

In this section, we briefly revisit  three approaches for the single variable case  that are pertinent to our work. The single variable function {to be approximated} can be considered as the transfer function of a non-parametric dynamical system, for example. 

Consider a single variable function $ H (s) $ and assume access to its samples 
\begin{equation} \label{svdata}
    h_i = H (s_i), \qquad s_i \in \C, \qquad \mbox{for}~~i = 1, \dots, N . 
\end{equation}
The three methods we discuss will build a rational function $ \Hr (s) $ that approximates the given data by means of interpolation, least squares (LS)  minimization, or a combination of both.
A key component in each case is the barycentric representation \cite{berrut06barycentric} of a rational function, given by
    \begin{equation}
        \label{eq:bary1}
        \Hr (s) = 
	    \frac{ n (s) }{ d (s) } =
    	\frac{ \ds \sum_{i=1}^k \frac{ \beta_i }{ s - \s_i } }{ 
	    \ds \sum_{i=1}^k \frac{ \alpha_i }{ s - \s_i } },
    \end{equation}
    where $\s_i \in \C$ are the support (interpolation) points, a subset of the sampling points $\{s_1,\ldots,s_N\}$, and $\beta_i,\alpha_i \in \C$ are the weights to be determined.
The algorithms we describe will differ from each other in how they choose 
$\s_i$'s, $\alpha_i$'s, and $\beta_i$'s. {Note that multiplying the numerator and denominator of $\Hr(s)$ by $ \prod_{i=1}^k (s-\sigma_i) $ reveals that $\Hr(s)$ is indeed a rational function of degree $k-1$}.

\subsection{The barycentric rational interpolant via Loewner matrices} \label{sec:loewner1d}
Given the data (samples) in	\cref{svdata},
the Loewner approach \cite{anderson90rational,An86scalar} builds {a} rational {function $\Hr(s)$  in \cref{eq:bary1} such that $\Hr(s_i) = h_i$ for all $i=1,\ldots,N$ (assuming a rational function of degree $k-1$ with this property exists). In this case we call $\Hr(s)$ a rational interpolant.}
Partition the sampling points and the corresponding function values:
	\begin{align*}
	\{ s_1, \dots, s_N \} & 
		= \{ \s_1, \dots, \s_k \} \cup 
		\{ \sh_1, \dots, \sh_{N-k} \} , \\
	\{ h_1, \dots, h_N \} & = 
		\{ g_1, \dots, g_k \} \cup 
		\{ \gh_1, \dots, \gh_{N-k} \} .
	\end{align*}
Interpolation at $\{\s_1,\s_2,\ldots,\s_k\}$ is attained by choosing 
	\begin{equation}
	\label{eq:int}
	    \beta_i = g_i \alpha_i ,
	\end{equation}
provided $\alpha_i$'s are nonzero.
For interpolation at  $\sh_i$, for $i=1,2,\ldots,N-k$,
we set 
\begin{align*}
     H (\sh_i) - \Hr (\sh_i) = 
     \gh_i - \frac{ n (\sh_i) }{ d (\sh_i) }
     = \gh_i - \left. {\sum_{j=1}^k \frac{g_j\alpha_j}{\sh_i-\s_j}  }
     \middle/
     {\sum_{j=1}^k \frac{\alpha_j}{\sh_i-\s_j}   } 
     \right. = 
     0.
\end{align*}
Multiplying out with the denominator, we obtain
\begin{align*}
   \gh_i \sum_{j=1}^k \frac{\alpha_j}{\sh_i-\s_j} - \sum_{j=1}^k \frac{g_j\alpha_j}{\sh_i-\s_j} 
    = \sum_{j=1}^k \frac{(\gh_i-g_j)\alpha_j}{\sh_i-\s_j}
    = \be_i^\top \Loew \ba = 0,
\end{align*}
where $ \be_i \in \R^{N-k}$ denotes the $i$th unit vector,
$ \ba^\top = \left[ \alpha_1 \cdots \alpha_k \right] $, and
$ \Loew \in \C^{(N-k)\times k} $ is the Loewner matrix given by
    \begin{equation}
    \label{eq:loew1}
        \Loew =  \left[ \begin{array}{ccc}  
		\frac{ \gh_1 - g_1 }{ \sh_1 - \s_1 } & \cdots &
		\frac{ \gh_1 - g_k}{ \sh_1 - \s_k } \\
		\vdots & \ddots & \vdots \\
		\frac{ \gh_{N-k} - g_1 }{ \sh_{N-k} - \s_1 } & \cdots &
		\frac{ \gh_{N-k} - g_k }
		{ \sh_{N-k} - \s_k } 
		\end{array} \right]
		.
    \end{equation}
Hence to enforce interpolation at  $\{\sh_1,\sh_2,\ldots,\sh_{N-k}\}$, the unknown coefficient vector 
$ \ba^\top = \left[ \alpha_1 \cdots \alpha_k \right] $
is obtained by solving the linear system
	\begin{equation} \Loew \ba = \bf 0 \label{Lnull} \end{equation}
{for $\ba \neq 0$. In particular, $\ba$ can be chosen as a singular vector associated with a zero singular value of $\Loew$ (assuming such a singular value exists).} Here, we skip the details for the conditions on   $\Loew$ and its null space to guarantee the existence and uniqueness of a degree $k-1$ rational interpolant of the form \cref{eq:bary1} and  refer the reader to   \cite{An86scalar,AntBG20} for details. 
A simple case to consider  is when $N = 2k-1$. In this case, the Loewner matrix is $\Loew \in \C^{(k-1) \times k}$, with, at least, a one-dimensional nullspace. 
Considering the fact that a proper rational function of degree $k-1$ has $2k-1$ degrees of freedom (after normalization of the highest coefficient in the denominator), choosing $N=2k-1$ will yield a unique rational interpolant (under certain conditions \cite{An86scalar,AntBG20}). By introducing the  notion of the shifted Loewner matrix, in
\cite{mayo2007fsg} 
the Loewner approach has been extended to a state-formulation where the rational interpolant can be directly written in a state-space form, as in \cref{Hinss}, without forming the barycentric form. However, for the parametric problems, the barycentric formulation is the key and we refer the reader to \cite{mayo2007fsg,ALI17,AntBG20} and the references therein for the state-space based Loewner construction for modeling dynamical systems {without parameter dependencies}.

\subsection{Vector fitting for rational least-squares approximation}  \label{sec:vf}
Instead of constructing a rational {interpolant}, one can also consider {building a rational approximant by} fitting the data in a least-squares (LS) sense. Thus, given the samples \cref{svdata}, the goal is now to construct a rational function $\Hr(s)$ that {solves the LS problem
\begin{align*}
 \min_{\alpha_j,\beta_j} \sum_{i=1}^N | \Hr (s_i) - h_i |^2.
\end{align*}}
There are various approaches to solving rational LS approximation from measured data; see, e.g., \cite{hokanson2017projected,gonnet2011robust,Gu99rational,san63transfer,drmac2015vector,berljafa2017rkfit,levy59complex,hokanson2018least,mlinaric2022} and the references therein.  Due to its close connection to the barycentric form we consider here, we briefly review the vector fitting (\vf)
method of  \cite{Gu99rational}.

\vf starts with a slightly revised version of $\Hr(s)$ with the form 
\begin{equation}
    \label{eq:vfd}
    \Hr(s) = \frac{n(s)}{d(s)}= \frac{\ds \sum_{i=1}^k \frac{ \beta_i }{ s - \s_i } }{\displaystyle 1 + \sum_{i=1}^k \frac{ \ds \alpha_i }{ s - \s_i }} + d_1 + s e_1. 
\end{equation}
A fundamental difference from the interpolation framework of {\Cref{sec:loewner1d} is that $\{\s_i\}$ in 
\cref{eq:vfd} are \emph{not} a subset of sampling points, are chosen independently, and in \vf are updated at every step. The choice of
$\{\s_i\}$ in 
\cref{eq:vfd} will be clarified later.
The additional  ``$1$" in the denominator guarantees that the first term in $\Hr(s)$ is  strictly proper. The term  $d_1 + s e_1$, if needed, allows  polynomial growth around $s = \infty$, which could be necessary in approximating transfer functions corresponding to differential algebraic equations \cite{GugSW13,MehS05,benner2017model}. These details are not fundamental to the focus of this paper; therefore we skip those and assume $d_1 = e_1 = 0$. For details, we refer the reader to \cite{Gu99rational,grivet2015passive}.

Using  \cref{eq:vfd}, the LS error can be written as 
\begin{align*}  \label{lserror}
 \sum_{i=1}^N | \Hr (s_i) - h_i |^2 = 
 \sum_{i=1}^N \frac{1}{| d(s_i) |^2}
 |n(s_i) - d(s_i) h_i|^2.
\end{align*}
This is a nonlinear LS problem. Starting with an initial guess $d^{(0)}(s)$, Sanathanan and Koerner \cite{san63transfer} converts this nonlinear LS problem into a sequence of weighted linear LS problems, which we will call the \textsf{SK} iteration:
\[
    \min_{n^{(j+1)},d^{(j+1)}} \sum_{i=1}^N  \left| \frac{n^{(j+1)}(s_i) - d^{(j+1)}(s_i) h_i}{d^{(j)}(s_i)} \right|^2,~~~j=0,1,2,\ldots.
\]
Note that the problem is now linear in the unknowns $n^{(j+1)}(s)$ and $d^{(j+1)}(s)$. The \textsf{SK} iteration uses the polynomial basis for $n(s)$
and $d(s)$. \vf, instead, uses the barycentric form  \cref{eq:vfd}, which proves to be the crucial step since it allows updating $\{\s_i\}$  in each step. \vf updates $\{\s_i\}$ as the zeros of the denominator $d^{(j)}(s)$ from the previous iteration, i.e., 
$d^{(j)} (\s_i^{(j+1)}) = 0$.
{This updating procedure for $\{\s_i\}$ and a proper rescaling}  result in a sequence of unweighted linear LS minimization problems of the form
\[
    \min_{\ba^{(j+1)}} \left\| \mathcal{A}^{(j)} \ba^{(j+1)} - \bh \right\|_2,
\]
where $\bh = \left[ h_1 ~ \cdots ~ h_N \right]^\top $, $\ba=\left[\beta_1 ~\cdots~ \beta_k~ \alpha_1 ~\cdots~ \alpha_k \right]^\top$, and $\mathcal{A}^{(j)}$ is given by
\[
    \mathcal{A}^{(j)} = \left[ \begin{array}{cccccccc}
    \frac{1}{s_1-\s_1^{(j)}} & \frac{1}{s_1-\s_2^{(j)}} & \cdots & \frac{1}{s_1-\s_k^{(j)}} & \frac{-h_1}{s_1-\s_1^{(j)}} &
\frac{-h_1}{s_1-\s_2^{(j)}} & \cdots & \frac{-h_1}{s_1-\s_k^{(j)}} \\[0.3em]
\frac{1}{s_2-\s_1^{(j)}} & \frac{1}{s_2-\s_2^{(j)}} & \cdots & \frac{1}{s_2-\s_k^{(j)}} & \frac{-h_2}{s_2-\s_1^{(j)}} &
\frac{-h_2}{s_2-\s_2^{(j)}} & \cdots & \frac{-h_2}{s_2-\s_k^{(j)}} \cr
\vdots & \vdots & \vdots & \vdots & \vdots & \vdots & \vdots & \vdots \cr
\frac{1}{s_N-\s_1^{(j)}} & \frac{1}{s_N-\s_2^{(j)}} & \cdots & \frac{1}{s_N-\s_k^{(j)}} & \frac{-h_N}{s_N-\s_1^{(j)}} &
\frac{-h_N}{s_N-\s_2^{(j)}} & \cdots & \frac{-h_N}{s_N-\s_k^{(j)}}
    \end{array} \right].
\]
Note that the Loewner matrix $\Loew$ appearing in the interpolation setting of \Cref{sec:loewner1d} is now replaced with 
 $\mathcal{A}^{(j)}$, which consists of a Cauchy and a diagonally-scaled Cauchy matrix. Despite dependence on the barycentric form, there is a fundamental difference from the Loewner framework of \Cref{sec:loewner1d}: The coefficients
 $\{ \alpha_i\}$ and $\{ \beta_i\}$ in the
barycentric form are chosen independently to minimize the LS error. This is in contrast to the Loewner setting where one sets $\beta_i = h_i \alpha_i$ to enforce interpolation. Moreover,
the points $\{\sigma_i\}$ are updated at every step.
 
Convergence of \vf is an open question. Even though one can construct examples where the iteration does not converge \cite{lefteriu2013convergence}, 
its behavior in practice is more robust. When initial set $\{\s_i\}$ is chosen appropriately, the algorithm usually converges
quickly. As \vf converges, due to the updating scheme of $\{\s_i\}$, the denominator  $d^{(k)}(s)$
converges to $1$ and one obtains a pole-residue formulation for $\Hr(s)$. However, this is not needed. The algorithm can be terminated early 
with $\Hr(s)$ having the barycentric form as in \cref{eq:vfd}.
\subsection{The \aaa algorithm}
Given the samples $\{H(s_i)\}_{i=1}^N$, we have seen two frameworks for constructing {$\Hr(s)$}: the barycentric rational interpolation via Loewner matrices (\Cref{sec:loewner1d}) and the rational LS approximation via \vf (\Cref{sec:vf}). Both methods depend on the barycentric form and differ in how they choose the variables in this representation.  
The Adaptive Anderson-Antoulas (\aaa) algorithm
developed by Nakatsukasa et al. \cite{Na18AAA}
is an iterative algorithm that elegantly integrates these two frameworks (interpolation and LS) combining their strengths, leading to a powerful  framework for rational approximation.

As in \Cref{sec:loewner1d}, we partition the sampling points $\{s_i\}$ and the samples $\{ h_i\}$ into two disjoint data sets:
\begin{align}
\begin{array}{rrcccccl} \label{AAAdata}
 \mbox{sampling~points:} \hspace*{-2ex} &   \{ s_1, \dots, s_N \} \hspace{-2ex} & = &\hspace{-2ex}
        \{~\s_1, \dots, \s_k ~\} \hspace{-2ex}&\cup& \hspace{-2ex}
		\{~\sh_1, \dots, \sh_{N-k}~\} \hspace{-2ex}&\eqdef& \hspace{-2ex}
		\{~\bs ~\cup~\bsh~\}, \\
\mbox{sampled~values:} \hspace*{-2ex} &	 \{ h_1, \dots, h_N \}\hspace{-2ex} & = &\hspace{-2ex}
	    \{ ~ g_1, \dots, g_k ~\} \hspace{-2ex}&\cup &\hspace{-2ex}
		\{ ~ \gh_1, \dots, \gh_{N-k} ~\} \hspace{-2ex}&\eqdef&\hspace{-2ex}  \{~\bg ~\cup~ \bgh~\}.
		\end{array}
\end{align}
This partitioning will be clarified later.  Assume the barycentric form for $\Hr(s)$ as in \cref{eq:bary1}, which we repeat here:
 \begin{equation}
        \tag{\ref{eq:bary1}}
        \Hr (s) = 
	    \frac{ n (s) }{ d (s) } =
	    \left.
    	{ \ds \sum_{i=1}^k \frac{ \beta_i }{ s - \s_i } }
    	\middle/
    	{ 
	    \ds \sum_{i=1}^k \frac{ \alpha_i }{ s - \s_i } }. \right.
    \end{equation}

Now assume that, we want to enforce interpolation at the points $\bs$. Therefore, in \cref{eq:bary1} we  set $\beta_i = g_i \alpha_i$ for $i=1,2,\ldots,k$, as we did in \Cref{sec:loewner1d}. However, as opposed to enforcing interpolation on $\bsh$ as well, \aaa chooses the coefficients $\{ \alpha_i\}$ to minimize the LS error over the remaining sampling points $\bsh$.

As in \Cref{sec:vf}, the LS problem 
over the sampling points
$\bsh$ is nonlinear due to dependence on the denominator $d(s)$. \vf algorithm used the \textsf{SK}-iteration to convert this nonlinear LS problem to a sequence of linearized LS problems. \aaa uses a different linearization. More precisely, for the point $\sh_i$, \aaa uses the linearization
    \begin{align}
    \label{eq:min}
     H (\sh_i) - \Hr (\sh_i) 
        & =
          \gh_i - \dfrac{n(\sh_i)}{d(\sh_i)} 
        = 
      \dfrac{1}{d(\sh_i)} \left( \gh_i d (\sh_i) - n (\sh_i) \right)
        \\ & \rightsquigarrow \label{eq:simpl}
    \gh_i d (\sh_i) - n (\sh_i)
       =
       \sum_{j=1}^k \dfrac{ ( \gh_i - g_j ) \alpha_j }{ \sh_i - \s_j }
       =
        \be_i^\top \Loew \ba,
    \end{align}
where $ \Loew $ is the Loewner matrix defined as in \cref{eq:loew1} and 
$\ba =[\alpha_1~\cdots~\alpha_k]^\top$. {This means we simply drop the term $1/d(\sh_i)$ in order to compute the coefficient vector $\ba$ via the linear LS problem (over $\bsh$), namely}
\begin{equation}  \label{minLa1d}
    \min_{\|\ba\|_2 = 1} \left\| \Loew \ba\right\|_2.
\end{equation}
Before elaborating on how \aaa  partitions the data set for interpolation and LS, we point out the difference between \cref{Lnull} and \cref{minLa1d} in determining $\ba$. In the interpolation case, assuming that there exists an underlying degree $k-1$ rational interpolant, the Loewner matrix has a null space and thus we solve $\Loew \ba = 0$. On the other hand, in the case of linearized LS problem in \aaa, such a rational interpolant does not exist (consider it as too many data points and not enough degrees of freedom), and one solves the minimization problem \cref{minLa1d} by choosing $\ba$ as the right singular vector corresponding to the smallest singular value of $\Loew$.

\aaa iteratively partitions the data using a greedy search at each step. Let 
$\Hr(s)$ denote the \aaa approximant at step $k$ corresponding to the interpolation/LS data partitioning  in \cref{AAAdata}. The next sampling point, $\s_{k+1}$, to be added to interpolation set $\bs$, is determined by finding $\sh_i$ for which the current error is maximum, i.e.,
    \[ 
    \s_{k+1} = \argmax_{i=1,\ldots,N-k} \left| H(\sh_i) - \Hr (\sh_i) \right|. 
    \]
Then, the algorithm proceeds by updating the interpolation and LS data partition, setting $\beta_{k+1} = g_{k+1} \alpha_{k+1}$, and by solving \cref{minLa1d} for the updated coefficient vector.  
\aaa is terminated when either a pre-specified error tolerance or an order is achieved. We refer the reader to the original source \cite{Na18AAA} for details. We also note that 
a similar greedy search for computing interpolation points was proposed in \cite{druskin10adaptive,feng2019new} in  projection-based interpolatory model reduction and in
\cite{lefteriu2010new} in Loewner-based interpolatory modeling. 

As \aaa proceeds, 
a new column is added to $\Loew$ at every step. 
Therefore, 
assuming large  number of data points $N$, the matrix $\Loew$ in \aaa is tall and skinny, and thus generically does not have a null space. However, if $\Loew$ happens to have a nullspace after a certain iteration index,  the \aaa approximant will interpolate the full data set and coincide with the rational interpolant of \Cref{sec:loewner1d}, assuming a unique solution.

{
\begin{remark} 
    \label{rem:denominator}
    \emph{Adding $1/d(s)$ as a weight.}
    It was pointed out in \cite[\S 10]{Na18AAA} that one can introduce weighted norms in the LS problem in every step of \aaa by scaling the rows of the Loewner matrix. 
    Inspired by the \textsf{SK} iteration and \vf,
    another type of weighting can be introduced by modifying
   the linearization step \eqref{eq:simpl}  in  \aaa  as
    \begin{equation*}
        H (\sh_i) - \Hr (\sh_i) = 
        \dfrac{1}{d(\sh_i)} 
        \left( \gh_i d (\sh_i) - n (\sh_i) \right)
        \rightsquigarrow
        \dfrac{1}{d^{-}(\sh_i)} 
        \left( \gh_i d (\sh_i) - n (\sh_i) \right)
        ,
    \end{equation*}
    where $d^{-}(s)$ denotes the denominator of the \aaa approximation from the previous step, thus keeping 
    the error still linear 
    in the variables $n(s)$ and $d(s)$ to be computed.
    Then, the coefficient vector $\ba$ can be found
    by solving the
    weighted linear LS problem
        $\min_{\|\ba\|_2=1} \left\| \Delta \Loew \ba \right\|_2$,
    where $\Delta$ is a $ k \times k $ diagonal matrix with the diagonal elements $
        \Delta_{ii} = 1/d^- (\sh_i)$.
   In our numerical experiments, this revised implementation applied to various examples did not result in a significant advantage.
    The only improvement we observed, and 
    only in some cases, 
    was a reduction by one unit in the order of the rational approximation corresponding to the same error tolerance.
    Due to these numerical observations,
    we do not investigate this further here or in the multivariate case below. Note that this weighting strategy by $1/d(s)$ focuses on adding weighting during \aaa.
    In two recent works \cite{nakatsukasa2019algorithm,filip18rational} in the setting of 
    rational minimax approximation, \aaa is followed by the Lawson
    algorithm \cite{lawson1961contribution}, an iteratively weighed LS iteration, yielding the \aaa-Lawson method. The weighting in \aaa-Lawson appears in the Lawson step, not in \aaa. 
\end{remark}
}

  The \aaa algorithm has proved  very successful and has been employed in many applications including nonlinear eigenvalue problems \cite{lie18auto},
  rational minimax approximation  
  \cite{filip18rational}, and
rational approximations over disconnected domains \cite{Na18AAA}. Our goal, in the following sections, is to extend \aaa to approximating parametric (dynamical) systems from their samples.

\section{\paaa: \aaa for parametric dynamical systems}
\label{sec:propmethod}
In this section, we introduce the parametric \aaa (\textsf{p-AAA}) algorithm, which extends  \aaa to multi-variable problems  appearing in the modeling of (the transfer function of) parametric dynamical systems.  We start with the two-variable case first and illustrate its performance on various examples. Then, 
we briefly discuss how \paaa can be applied to functions with more than two variables followed by an application to such an example. In this section, to simplify the initial discussion, we only focus on scalar-valued functions. The \paaa for matrix valued functions is discussed in \Cref{subsec:mimoAAAp}.

\subsection{\paaa for the two-parameter case}
\label{sec:aaaparam}

We consider  the problem of rational approximation of a multivariate function $ H (s,p) $ from data.
We assume only access to the samples of $H(s,p)$, i.e., we have 
\begin{equation}
    \label{eq:prob}
    h_{ij} = H (s_i,p_j) \in \C \qquad
   \mbox{for}\quad i = 1, \dots, N ~~\mbox{and}~~
    j = 1, \dots, M.
\end{equation}
Analogously to the single-variable case,
we express the rational approximant 
$\Hr(s,p)$ in its \emph{two-variable} barycentric form
    \begin{equation}
    \label{eq:Hrbary}
	\widetilde{H} (s,p) = \frac{n(s,p)}{d(s,p)} = 
	\sum_{i=1}^k \sum_{j=1}^q 
	\frac{ \beta_{ij} }{ (s-\s_i) (p-\p_j) }
	\biggn/
	\sum_{i=1}^k \sum_{j=1}^q 
	\frac{ \alpha_{ij} }{ (s-\s_i) (p-\p_j) } ,
	\end{equation}
	where $\{ \s_i \}$ and 
	$\{ \p_j \}$
	are to-be-determined  points, subsets of
	$\{s_i\}$ and $\{p_j\}$, respectively; and $\beta_{ij}$ and $\alpha_{ij}$ are scalar coefficients to be chosen based on the interpolation and LS conditions to be enforced on the data \cref{eq:prob}. {Similar to the single variable case multiplying $n(s,p)$ and $d(s,p)$ by $\prod_{i=1}^{k}\prod_{j=1}^{q} (s-\sigma_i)(p-\pi_j)$ reveals that $\Hr(s,p)$ is a two-variable rational function of order $(k-1,q-1)$.}
	The number of points, $k$, in the variable-$s$
	and $q$ in the variable-$p$ will be automatically determined by the algorithm.
	
We start by partitioning the data \cref{eq:prob}:
\begin{align}
        \begin{split}
        \label{eq:Lpart}
        \{s_1, \dots, s_N\} & = \{ \s_1, \dots, \s_k \} \cup 
		    \{ \sh_1, \dots, \sh_{N-k} \}  \eqdef 
		    \{\bs ~\cup~ \bsh\},
		    \\
      \{p_1, \dots, p_M\} & = \{\p_1,\dots,\p_q\} \cup 
            \{\ph_1, \dots, \ph_{M-q}\}
             \eqdef  \{\boldp~\cup~\bph\},~\mbox{and} \\
      &
      \renewcommand{\arraystretch}{1.2} \left[ \begin{array}{c|c}
		[ H (\s_i,\p_j) ] & [ H (\s_i,\ph_j) ] \\\hline
		[ H (\sh_i,\p_j) ] & [ H (\sh_i,\ph_j) ]
		\end{array} \right] \renewcommand{\arraystretch}{1}
		\eqdef 
		\renewcommand{\arraystretch}{1.3}
		\Dpart,
		\renewcommand{\arraystretch}{1}
        \end{split}
    \end{align}	
   where 
    $ [ H (\s_i,\p_j) ] = \F$ denotes the
    $k \times q$ matrix whose $(i,j)$th  entry is 
    $H (\s_i,\p_j)$; and similarly for other quantities such as $[ H (\s_i,\ph_j) ]= \Fhs$.
    We use $\F$  to denote the sampled data corresponding to the sampling points $(\bs,\boldp)$ (and similarly for other samples)
    as opposed to $\bH_{\bs\boldp}$ since
    $\bH(s,p)$ will be used in  \Cref{subsec:mimoAAAp} to denote  matrix-valued (transfer) functions.
    How data is partitioned as  in \cref{eq:Lpart} will be clarified later.

    \subsubsection*{Interpolation of the sampled data $\F$} 
    In accordance with the partitioning of the data in \cref{eq:Lpart},
  first we enforce interpolation at $(\bs,\boldp)$, i.e., on the (1,1) block $\F$, of the sampled data. This is achieved by setting, in \cref{eq:Hrbary},
\begin{equation}
    \label{eq:int2}
    \beta_{ij} = H (\s_i,\p_j) \alpha_{ij},
\end{equation}
assuming $\alpha_{ij} \neq 0$. 
This follows from the fact that, as in the single variable case, the barycentric form $\Hr(s,p)$ in \cref{eq:Hrbary}
has a removable singularity at $(\s_i,\p_j)$ with
$\Hr(\s_i,\p_j) = \beta_{ij}/\alpha_{ij}$~\cite{An12two}, and  the choice
\eqref{eq:int2} leads to interpolation of the data in $\F$. 
This determines $\beta_{ij}$. What remains to fully specify $\Hr(s,p)$ is the choice of $\alpha_{ij}$. 

\subsubsection*{LS fit for the uninterpolated data}
The rational approximant $\Hr(s,p)$ in \cref{eq:Hrbary} with the choice \cref{eq:int2}, 
interpolates the data $\F$. Next, we show how to chose $\alpha_{ij}$ so that
$\Hr(s,p)$ minimizes the LS error in the remaining sampled data set  in $\Fhs$, $\Fhp$, and $\Fh$, i.e., to minimize 
\begin{equation}
\left \|
\beps
\right\|_2 = 
\left \|
\begin{bmatrix}
\beps_1 \\ 
\beps_2 \\
\beps_3
\end{bmatrix}
\right\|_2 \eqdef 
\left \|
\begin{bmatrix} \textsf{vec}(\Fhs) \\ \textsf{vec}(\Fhp) \\
\textsf{vec}(\Fh) \end{bmatrix}- \begin{bmatrix}\textsf{vec}(\Hr(\bs,\bph)) \\  \textsf{vec}(\Hr(\bsh,\boldp))\\ \textsf{vec}(\Hr(\bsh,\bph))\end{bmatrix}  
\right\|_2.
\label{errorindata}
\end{equation}
As in the single variable {case}, the resulting LS problem is nonlinear and we will linearize it similarly. To illustrate this more clearly, 
we rewrite the error for a sample $(\sh,\ph)$ in the set $(\bsh,\bph)$ corresponding to a component in
$\beps_3$ in \cref{errorindata} as
    \begin{align*}
        H (\sh,\ph) - \Hr (\sh,\ph)
        & =
        H (\sh,\ph) - \dfrac{n(\sh,\ph)}{d(\sh,\ph)}
        \\ & = 
        \dfrac{1}{d(\sh,\ph)} \left( H (\sh,\ph) d (\sh,\ph) - n (\sh,\ph) \right)
        \\ & \rightsquigarrow 
        H (\sh,\ph) d (\sh,\ph) - n (\sh,\ph)~~~(\mbox{linearization})
        \\ & =
        H (\sh,\ph) \sum_{i=1}^k \sum_{j=1}^q 
	\frac{ \alpha_{ij} }{ (\sh-\s_i) (\ph-\p_j) } - \sum_{i=1}^k \sum_{j=1}^q \frac{ H (\s_i,\p_j) \alpha_{ij} }{ (\sh-\s_i) (\ph-\p_j) }
        \\ & =
        \sum_{i=1}^k \sum_{j=1}^q \dfrac{ ( H(\sh,\ph) - H(\s_i,\p_j ) )  \alpha_{ij} }{ (\sh - \s_i)(\ph-\p_j) }
        \\ & =
        \be_{{\tiny \sh\ph}}^\top\, \Loewh\, \ba,
    \end{align*}
where  \begin{equation} \label{vectora} 
\ba^\top = [ \alpha_{11} \cdots \alpha_{1q} ~|~ \cdots ~|~ \alpha_{k1} \cdots \alpha_{kq} ] \in \C^{k q},\end{equation}
$ \Loewh \in \C^{(N-k)(M-q) \times (kq)} $ is the 2D Loewner matrix\footnote{Similar to the single-variable case, the Loewner matrices appearing in \paaa here also appear in the parametric Loewner framework 
\cite{An12two,Io14data} where one aims to interpolate the full data set. We revisit these 
connections in \cref{paramLoewremark}.} defined by
\begin{align}
    \begin{split}
    \label{eq:loew2}
    \Loewh & = 
    \left[ 
    \begin{array}{ccc|c}
         \frac{H(\sh_1,\ph_1)-H(\s_1,\p_1)}{(\sh_1-\s_1)(\ph_1-\p_1)} & 
        \cdots & 
        \frac{H(\sh_1,\ph_1)-H(\s_1,\p_q)}{(\sh_1-\s_1)(\ph_1-\p_q)} &
        \cdots \\
        & \vdots & & \\
        \frac{H(\sh_{N-k},\ph_{M-q})-H(\s_1,\p_1)}{(\sh_{N-k}-\s_1)(\ph_{M-q}-\p_1)} & 
        \cdots & 
        \frac{H(\sh_{N-k},\ph_{M-q})-H(\s_1,\p_q)}{(\sh_{N-k}-\s_1)(\ph_{M-q}-\p_q)} & \cdots
    \end{array}
    \right. \\[2ex]
    & \hspace{2cm} \left.
    \begin{array}{c|ccc}
         \cdots &
        \frac{H(\sh_1,\ph_1)-H(\s_k,\p_1)}{(\sh_1-\s_k)(\ph_1-\p_1)} &
        \cdots &
        \frac{H(\sh_1,\ph_1)-H(\s_k,\p_q)}{(\sh_1-\s_k)(\ph_1-\p_q)} \\
        & & \vdots & \\
        \cdots & 
        \frac{H(\sh_{N-k},\ph_{M-q})-H(\s_k,\p_1)}{(\sh_{N-k}-\s_k)(\ph_{M-q}-\p_1)} &
        \cdots &
        \frac{H(\sh_{N-k},\ph_{M-q})-H(\s_k,\p_q)}{(\sh_{N-k}-\s_k)(\ph_{M-q}-\p_q)}
    \end{array}
    \right],
    \end{split}
\end{align}
and $\be_{{\tiny \sh\ph}} \in \R^{(N-k)(M-q)}$ is the unit vector with $1$ in the entry corresponding to the sample $(\sh,\ph)$. Therefore, the linearized error $\beps_3$
is given by $\Loewh\,\ba$. {Note that $\Loewh$ has a nested structure that takes the differences of all combinations of samples into consideration. The entries are explicitly given by:
$$
 \Loewh(\jh + (M-q)(\ih - 1), j + q(i-1)) = \frac{H(\sh_{\ih},\ph_{\jh})-H(\s_i,\p_j)}{(\sh_{\ih}-\s_i)(\ph_{\jh}-\p_j)}, \; 
$$
for $\jh = 1,\ldots,M-q$, $\ih = 1,\ldots,N-k$, $j=1,\ldots,q$, and $i=1,\ldots,k$.}

The procedure follows similarly for the other blocks  in \cref{errorindata}. {First note that
$$
\Hr(\s_i,\ph_\ell) = \left.\sum_{j=1}^{q}\frac{\beta_{ij}}{\ph_\ell - \p_j} \middle /{\sum_{j=1}^{q}\frac{\alpha_{ij}}{\ph_\ell - \p_j}}\right..
$$
}
{This expression together with the definition of $\beta_{ij}$ in \cref{eq:int2}
allow us to write the} error corresponding to a sample $(\s_i,\ph_\ell)$ in $\beps_1$
in \cref{errorindata} as 
\begin{align*}
    H(\s_i,\ph_\ell) - \Hr(\s_i,\ph_\ell) & = 
    \left(
    \left. {\ds \sum_{j=1}^q 
    \frac{H(\s_i,\ph_\ell)-H(\s_i,\p_j)}{\ph_\ell-\p_j} \alpha_{ij} } \right)
    \middle/
    {
    \ds \sum_{j=1}^q \frac{\alpha_{ij}}{\ph_\ell-\p_j} 
    }
    \right. \\
    & 
 \rightsquigarrow
    \sum_{j=1}^q \frac{H(\s_i,\ph_\ell)-H(\s_i,\p_j)}{\ph_\ell-\p_j} \alpha_{ij}~~~{(\mbox{linearization})}\\
    & = \be_\ell^\top \Loew_{\s_i} \ba_i,
\end{align*}
{where}
$ \ba_i^\top = [ \alpha_{i1} \cdots \alpha_{iq} ]\in \C^{q}$
is the $i$th row block of $\ba$,
$\be_\ell \in \C^{M-q}$ is the $\ell$th unit vector, 
and
 $\Loew_{\s_i} \in \C^{(M-q)\times q}$ is the regular (1D) Loewner matrix corresponding to the data in the $i$th row of $[\F ~ \Fhs]$, i.e.,
\begin{equation} \label{Lsigmailj}
    ( \Loew_{\s_i})_{\ell,j} = 
\frac{H(\s_i,\ph_\ell)-H(\s_i,\p_j)}{\ph_\ell-\p_j}~~\mbox{for}~~\ell = 1,2,\ldots,M-q~~\mbox{and}~~j = 1,2,\ldots,q.
\end{equation}
Similar to \cite{Io14data}, define \begin{equation}
\label{Lsigma}
    \Loewhs = \textsf{diag}(\Loew_{\s_1},\ldots,\Loew_{\s_k})
\in \C^{(k (M-q)) \times (kq)}.
\end{equation}
Then, the linearized error corresponding to $\beps_1$
in \cref{errorindata} is given by $\Loewhs \ba$.
Similarly, 
we can linearize and rewrite the error for the $\beps_2$-block in \cref{errorindata} 
as $\Loewhp\ba$ where $\Loewhp$ is an assembly of all 1D Loewner matrices $\Loew_{\p_j}$ corresponding to the data in each column of $\left[\begin{array}{l}\F\\\Fhp\end{array}\right]$.
Putting all three together, after linearization, minimizing the LS error  \cref{errorindata} in \paaa becomes
\begin{equation}
\min_{\| \ba\|_2 = 1}
\|\Loew_2\ba\|_2\quad \mbox{where}\quad
    \label{eq:l2mat}
    \Loew_2 = \left[ 
    \Loewhs^\top ~~ \Loewhp^\top ~~ \Loewh^\top
    \right]^\top {\in \C^{(MN-kq) \times kq}}.
\end{equation}
We summarize this analysis in a corollary.
\begin{corollary}
    \label{cor:pAAA}
    Consider the data \eqref{eq:Lpart} and 
    let the corresponding barycentric rational approximant $\Hr (s,p) $ have the form in \eqref{eq:Hrbary}.
    \begin{enumerate}
        \item[~~(a)] 
        If \eqref{eq:int2} holds, 
        then
        \[ 
            \Hr (\s_i,\p_j) = H (\s_i,\p_j) 
            ,
            \quad i = 1, \dots, k, 
            ~ j = 1, \dots, q
            . 
        \]
        
        \item[~~(b)]
        Assume \eqref{eq:int2} holds.
       Choose the indices $\alpha_{ij}$ using 
       \begin{equation}
       [ \alpha_{11} \cdots \alpha_{1q} ~|~ \cdots ~|~ \alpha_{k1} \cdots \alpha_{kq} ] = \ba^\star 
       \quad \mbox{where}\quad \ds
        \ba^\star = \argmin_{\| \ba \|_2 = 1} \| \Loew_2 \ba \|_2,
       \end{equation}
       where $\Loew_2$ is as defined  in \cref{eq:l2mat}, with
       $\Loewh$ is as given by 
       \cref{eq:loew2},
       $\Loewhs$ by \cref{Lsigma} and \cref{Lsigmailj}, and 
       $\Loewhp$ is defined as
        \begin{align*}
            \Loewhp 
            & = 
            \left[ \begin{array}{ccc|c|ccc}
                \Loew_{\p_1} \be_1 && & & \Loew_{\p_1} \be_k &&  \\
                & \ddots & &\cdots && \ddots & \\
                && \Loew_{\p_q} \be_1 & &&& \Loew_{\p_q} \be_k 
            \end{array} \right] {\in \C^{(q(N-k))\times(kq) }} ,
            \end{align*}
            where
            \begin{align} \label{Lpij}
            \Loew_{\pi_j} (\ih,i)
            & = 
            \frac{ H (\sh_{\ih},\p_j) - H (\s_i,\p_j) }{ \sh_{\ih} - \s_i }
            , ~~ \ih = 1, \dots, N-k , ~~ i = 1, \dots, k,
        \end{align}
        {and $\be_i\in\C^k$ is the $i$th unit vector.}
        Then, the two-variable barycentric approximant minimizes the linearized LS error
        \[
            \Hr = \argmin_{\hat{H} = n/d} \sum_{i,j} | H(s_i,p_j) d (s_i,p_j) - n (s_i,p_j) |^2
        \]
        for the samples $(s_i,p_j)$ corresponding to 
        the error $\beps$ in \cref{errorindata}, i.e., for the data in $\{ \Fhs,\Fhp,\Fh\}$.
    \end{enumerate}
\end{corollary}

\subsubsection*{Choosing the interpolated vs LS-fitted data}
The last component of \paaa is  determining how to choose the data to be interpolated and the data to be fitted in the LS sense.  Let
$\Hr(s,p)$ in \cref{eq:Hrbary} be the 
current \paaa approximant  corresponding to the interpolation/LS partitioning in \cref{eq:Lpart}. 
Note that the order of the current approximation is $(k-1,q-1)$ and 
these orders need not be equal.
Then,  we select the next frequency-parameter tuple $(\s_{k+1},\p_{q+1})$ by means of the greedy search
\begin{equation} \label{greedy2d}
    (s_{\ih},p_{\jh}) = \argmax_{(i,j)} 
    | H(s_i,p_j) - \Hr(s_i,p_j) | .
\end{equation}
We do not simply set $(\s_{k+1},\p_{q+1}) = (s_{\ih},p_{\jh})$
since one of the entries might already be in the previous interpolation data. In other words, $s_{\ih}$ might already be in the set
$\bs$ or $p_{\jh}$ might already be in the set
$\boldp$ in \cref{eq:Lpart}. {We note that} this cannot occur for $s_{\ih}$ and $p_{\jh}$ simultaneously since we impose interpolation on the selected tuples. In other words, if  the tuple  $(s_{\ih},p_{\jh})$ was already in the interpolated data, we would have had 
 $ H (s_{\ih},p_{\jh}) - \Hr (s_{\ih},p_{\jh}) = 0 $, which means the whole data set is interpolated.
 If the point $p_{\jh}$ is already in the set $\boldp$ in \cref{eq:Lpart},
 then the order in the variable-$p$ remains unchanged as $q-1$ and 
 the set $\boldp$ is not altered. On the other hand, the point 
 $s_{\ih}$ is added to set $\bs$ in \cref{eq:Lpart}
 and  the order in the variable-$s$ is increased to $k$. {Conversely, $p_{\jh}$ is added to $\boldp$ and $s_{\ih}$ is not added to $\bs$ if the point $s_{\ih}$ is already in the set $\bs$.} This allows updating the orders in each variable independently, giving the algorithm flexibility to make the decision automatically.
Once  the 
data partitioning  \cref{eq:Lpart} (and the orders) are updated, \paaa computes the new coefficients $\beta_{ij}$ as 
in \cref{eq:int2}, and then solves the LS problem \cref{eq:l2mat} for  the updated coefficient vector $\ba$. The process is repeated until either a pre-specified error tolerance or desired orders  in $(s,p)$ are achieved. We give a brief sketch of 
\paaa in \cref{alg:p-AAA}. We use the notation $[x_{ij}]$ to denote a matrix whose $(i,j)$th entry is $x_{ij}$.
    \begin{algorithm}
    \caption{\paaa}
    \label{alg:p-AAA}
        \begin{algorithmic}[1]
        \STATE{Given $ \{s_i\}$, $\{p_j\}$, and $ \{h_{ij}\} = \{H (s_i,p_j) \}$}
        \STATE{Initialize: $ k = 0 $ and $ q = 0 $}
        \STATE{Define $ \Hr = average(h_{ij})$ and set error $ \gets \frac{\| [h_{ij}] - [\Hr] \|_\infty}{\|[h_{ij}]\|_\infty} $}
        \WHILE{error $>$ desired tolerance}
        \STATE{Select $ (s_{\ih}, p_{\jh}) $ by the greedy search \cref{greedy2d}}
        \STATE{Update the data partitioning \cref{eq:Lpart}:}
        \IF{$s_{\ih}$ was not selected at a previous iteration}
            \STATE{$k \gets k+1$}
            \STATE{$\s_k \gets s_{\ih}$}
        \ENDIF
        \IF{$ p_{\jh}$ was not selected at a previous iteration}
            \STATE{$q \gets q+1$}
            \STATE{$\p_q \gets p_{\jh}$}
        \ENDIF
        \STATE{Build $\Loew_2$ as in \cref{eq:l2mat}}
        \STATE{Solve $ \min \| \Loew_2 \ba \|_2 $ s.t. $\|\ba\|_2=1$} \label{lst:line:lsq}
        \STATE{Use $\ba$ to update the rational approximant $ \Hr (s,p)$ with \eqref{eq:Hrbary}--\eqref{eq:int2}}\label{lst:line:Hr}
        \STATE{error $ \gets \frac{\| [h_{ij}] - [\Hr (s_i,p_j)] \|_\infty}{\|[h_{ij}]\|_\infty} $} \label{lst:line:convergencecriterion}
        \ENDWHILE
        \RETURN $\Hr$
    \end{algorithmic}
    \end{algorithm}

\begin{remark} \label{paramLoewremark}
\emph{Parametric Loewner framework.} As in the single-variable case discussed in \Cref{sec:loewner1d}, one can choose to construct an approximation that interpolates the full-data \cref{eq:prob} as done in \cite{Io14data,An12two}. In this case,
 based on the ranks of  Loewner matrices, the orders $k$ and $q$ are chosen large enough so that, unlike in  \paaa, the matrix $\Loew_2$ has a null space and thus one chooses the coefficient vector $\ba$ by solving the linear system $\Loew_2 \ba = \bf0$. Therefore, the parametric Loewner framework \cite{Io14data,An12two} interpolates the full data in contrast to \paaa, which  greedily chooses a subset of data to interpolate and performs LS fit on the rest. When the orders
$k$ and $q$ are not chosen \emph{large enough}, the parametric Loewner framework no longer yields an interpolant, and instead a {Loewner \emph{approximant}} is obtained. For details we refer the reader to \cite{An12two,Io14data,ALI17,AntBG20}. Even though this situation is more similar to the case of \paaa, the major difference lies in the fact that \paaa is an iterative algorithm and chooses the interpolation data with a greedy search while performing LS fit on the rest. In other words, \paaa decides the data-partitioning \cref{eq:Lpart} automatically using a greedy search with an appropriately defined criterion. On the other hand, the parametric Loewner framework is a one-step algorithm and how to partition the data is not yet fully understood. Even though there have been recent efforts in this direction for the single-variable case \cite{karachalios2018data,karachalios2017case,embree2019pseudospectra}, this is still an open question, especially in the multivariate case. It will be worthwhile to investigate how the final data partitioning from \paaa affects the parametric Loewner construction and whether it improves the conditioning-issues, appearing, at times, in the (one-step) Loewner framework. 
\end{remark}
{
\begin{remark} \emph{Real state-space realization.} When working with dynamical systems, it is often desirable to have access to system matrices that constitute a state-space form similar to the one presented in \eqref{paramlti}. The system matrices are typically real-valued, a desirable property to retain in the rational approximant as well. As outlined in \cref{appendix:realization}, real state-space representations based on two-variable barycentric forms can be computed if all samples in the $p$ and $s$-variables are real valued~\cite{Io14data}. In the dynamical system setting the parameter samples  are generically real valued whereas the frequency are usually complex-valued. In order to ensure realness in the complex case, the frequencies need to be sampled in complex-conjugate pairs. This means that if $s_i \in \C$ is sampled, we also sample $\overline{s}_i$.
Then, if $s_{\ih}$ in Step 7 of \cref{alg:p-AAA} is a complex frequency,  we also add $\overline{s}_{\ih}$ to the interpolation data set and Line 8 of \Cref{alg:p-AAA} becomes $k \gets k+2$. 
We follow this approach in the examples discussed in  \Cref{sec:sriram,sec:gyro}. Algorithmic details are explained in \cref{appendix:realization}.
\end{remark}
}
{\begin{remark}
An important property of the single-variable \aaa algorithm is that either one obtains an approximant with a desired accuracy or an interpolant of minimal order. Although \paaa has similar properties, the interpolant may not be of minimal order. (This is illustrated in the numerical example of \Cref{sec:exsyn}.) We emphasize that this is only an issue for small synthetic examples as we consider in \Cref{sec:exsyn} where the underlying model is a low-order multi-parameter rational function to begin with. In most practical situations of interest (indeed for all the other examples we have considered), we obtain an approximant; not an exact recovery. A post-processing routine which ensures minimal order of interpolants (in case they occur) is presented in \cref{appendix:mininterpolant}.
\end{remark}}

\subsection{Numerical Examples}
Next, we illustrate the performance of \paaa on three numerical examples.
\subsubsection{Synthetic Transfer Function}

\label{sec:exsyn}
We use a simple model from \cite{Io14data}, which is a low-order rational function in two variables.
Consider
    \[
	H (s,p) = \frac{ 1 }{ 1 + 25 (s+p)^2 } + 
	\frac{ 0.5 }{ 1 + 25 (s-0.5)^2 } +
	\frac{ 0.1 }{ p + 25 } .
	\]
We sample this transfer function at $ H (s_i,p_j) $ for $ N = M = 21 $ frequency and parameter points linearly spaced in $ s_i \in [-1, 1 ] $ and  $ p_j \in [0, 1] $.
This is a rational function with order $(4,3)$. 
\paaa terminates after 7 iterations.  \cref{tab:syn_aaap} shows the greedy search selection at each iteration step. {Additionally, quantities related to the post-processing step presented in \cref{appendix:mininterpolant} are shown.}
	\begin{table}[htbp]
	\centering
	\begin{tabular}{|l|c|r|l|c|c|c|c|}
	\hline
	iter. & greedy selection & $\s_k$ & $\p_q$ & $(k,q)$ & $\dim \ker \Loew_2$ \\ \hline
	1 & $(0,0)$ & 0 & 0 & (1,1) & 0 \\ 
	2 & $(-1,0)$ & -1 & & (2,1) & 0 \\
	3 & $(0.1,0)$ & 0.1 & & (3,1) & 0 \\
	4 & $(0,1)$ & & 1 & (3,2) & 0 \\
	5 & $(-1,0.6)$ & & 0.6 & (3,3) & 0 \\
	6 & $(-0.6,0.1)$ & -0.6 & 0.1 & (4,4) & 0 \\
	7 & $(0.6,0.55)$ & 0.6 & 0.55 & (5,5) & 2 \\  \hline
	\multicolumn{4}{|c|}{post-processing as in \Cref{appendix:mininterpolant}} & (5,4) & 1 \\
	\hline
	\end{tabular}
	\caption{Example \ref{sec:exsyn} \paaa samples selected at each iteration}
	\label{tab:syn_aaap}
	\end{table}
Note that the \paaa approximation $\Hr$ {(without the post-processing) would have been of order
$(k-1,q-1) = (4,4)$}, as opposed to $(4,3)$ of the original model. 
This is due to the greedy search selecting frequencies and parameters to interpolate as tuples hence allowing for repetition.
In \cref{tab:syn_aaap} we see exactly how this happened for this example.
During iterations 2 and 3, no parameters are added for interpolation while during iterations 4 and 5, no frequencies are added for interpolation. 
Upon convergence, for this simple example where the underlying function is a low-order rational function itself, \paaa exactly recovers it. In other words, after step 7, all the data is interpolated. This shows another flexibility of \paaa. If the underlying order is low enough, the LS component is automatically converted to a full interpolation, thus, in this special example, giving the same approximant as the parametric Loewner approach \cite{Io14data}.

We present in \Cref{fig:syn_aaap} the evolution of the \paaa approximant at various iterations: first, third, and last (seventh).
As \Cref{fig:syn_aaap} shows that, upon convergence, the proposed algorithm captures the full model exactly.
	\begin{figure}[htbp]
	\centering
	\includegraphics[width=1\textwidth]{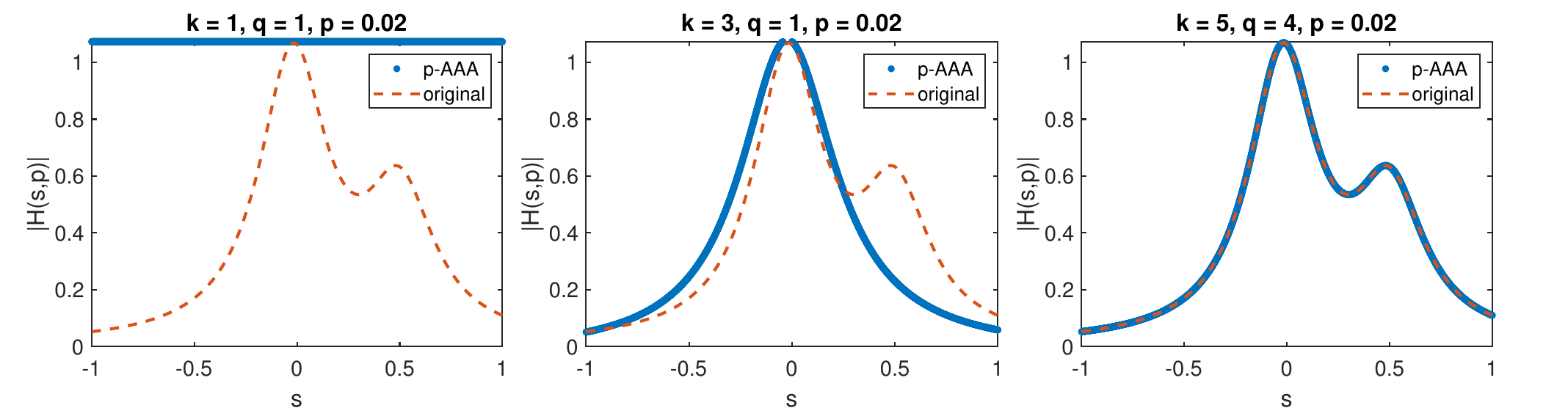}
	\caption{Example \ref{sec:exsyn}: \paaa approximation at various iterations}
	\label{fig:syn_aaap}
	\end{figure}
\subsubsection{A beam model}
\label{sec:sriram}
In this example, we consider the finite element model of a one-dimensional Euler-Bernoulli beam 
 with a string attached near its left boundary and  an input force  applied at its right boundary, as shown in
\Cref{fig:beam}. 
As \cool{for} the output $y(t)$, we measure the displacement at the right boundary where the forcing is applied.
    \begin{figure}
        \centering
        \includegraphics[width=\textwidth]{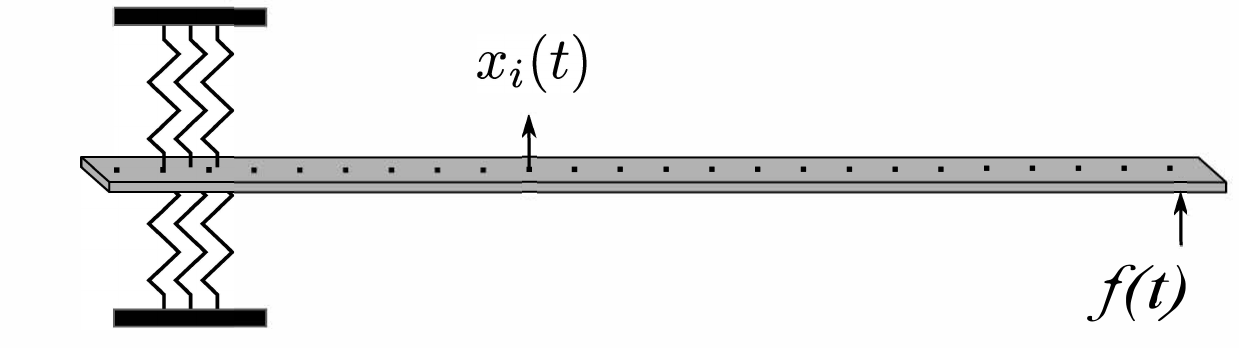}
        \caption{Example \ref{sec:sriram}: Visualization of an Euler-Bernoulli beam}
        \label{fig:beam}
    \end{figure}
We take the stiffness coefficient of the spring as the parameter and obtain  the parametric dynamical system
$$
\bM \ddot{\bx}(t,p) + \bG \dot{\bx}(t,p) + \bK(p) \bx(t,p) = \bb \oldu(t),~~~y(t,p) = \bc^\top \bx(t,p),
$$
with the corresponding transfer function 
$$H(s,p) = \bc^\top(s^2 \bM + s \bG + \bK(p))^{-1}\bb,$$
where $\bM$ and $\bG$ are, respectively, the mass and damping matrices; $\bK(p)$ is the parametric stiffness matrix; and $\bb$ and $\bc$ are, respectively, the input-to-state  and the state-to-output mappings.
We measure the transfer function  at
$H(s_i,p_j)$
for 
$N=3000$ frequency points $\{s_i\}$ in 
the interval $[0,2\pi\times 10^3]\imath$ where $\imath^2 = -1$ and  
for $M=3$ parameter values 
$ p_1 = 0.2$, $p_2 = 0.4$, and $p_3 =1 $.
\paaa  yields an approximant with
orders $(k,q)={(19,2)}$. Out of three parameter samples, 
\paaa chooses {$p_2= 0.4$ and $p_3= 1$} for interpolation. Using the same parameter and frequency samples, we also construct the parametric Loewner approximant \cite{Io14data}.
\Cref{fig:sriramLerr} shows the amplitude frequency responses of the original transfer function $H(s,p)$, and the \paaa and parametric Loewner approximants  for various parameter values, including  values that did not enter into \paaa or parametric Loewner construction ($p=0.8$ and $p=15$ in  \Cref{fig:sriramLerr}).
Both \paaa and parametric Loewner  yield
 highly accurate approximations, capturing the peaks in the frequency response accurately. To check the accuracy of the \paaa and parametric Loewner approximants further, we perform an exhaustive search over the parameter domain by computing, for {50} linearly spaced $\hat{p} \in [0,1]$,  the worst-case frequency domain error, i.e., $\max_s \mid H(s,\hat{p})- \Hr(s,\hat{p})\mid$ where 
$s= \imath \omega$ with $\omega \in [0,2\pi\times 10^3]$. {We use $3000$ $\omega$ samples to approximate the maximum error.}
The results in  \Cref{fig:sriramL} 
show that  \paaa is accurate throughout the full parameter domain and, for this example, outperforms the parametric Loewner approach.

\begin{figure}
    \centering
    \includegraphics[width=0.5\textwidth]{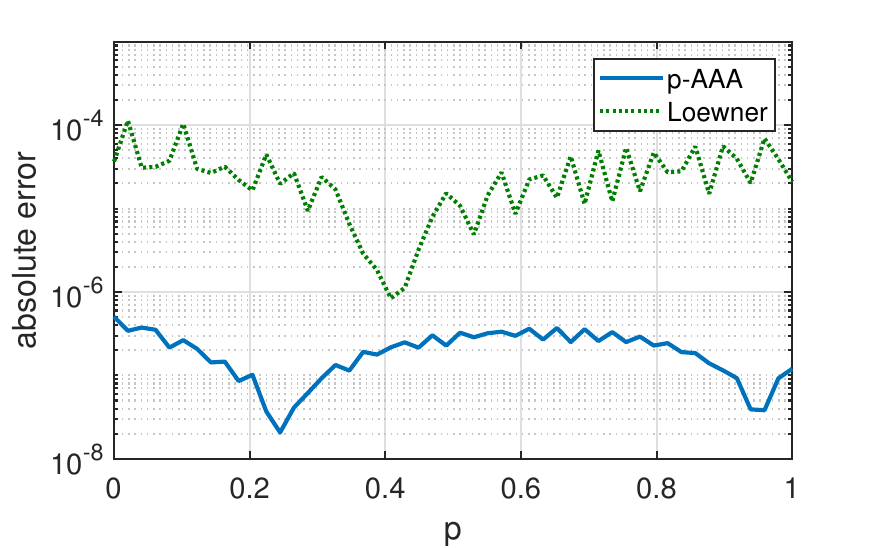}
    \caption{Example \ref{sec:sriram}:
    \paaa approximation for various parameter values and
    Loewner approximation with the same order as \paaa.
    }
    \label{fig:sriramL}
\end{figure}

\begin{figure}
    \centering
    \includegraphics[width=1\textwidth]{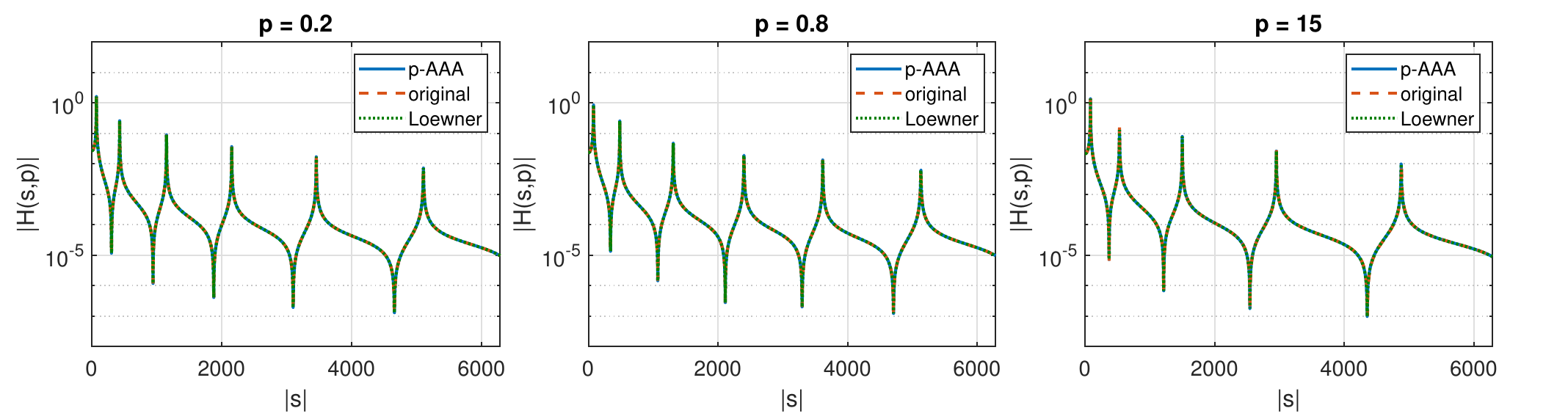}
    \caption{{Example \ref{sec:sriram}: \paaa and Loewner approximations in the s-interval sampled.}}
    \label{fig:sriramLerr}
\end{figure}

{
\subsubsection{\paaa convergence behaviour}
\label{ex:convergence}
\begin{figure}[htbp]
\label{fig:convergence}
\includegraphics[width=0.45\textwidth]{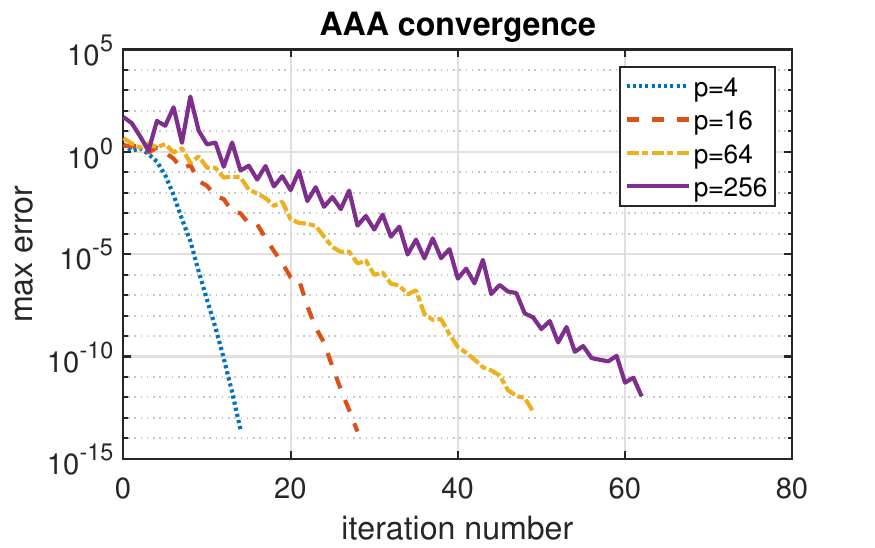}
\includegraphics[width=0.45\textwidth]{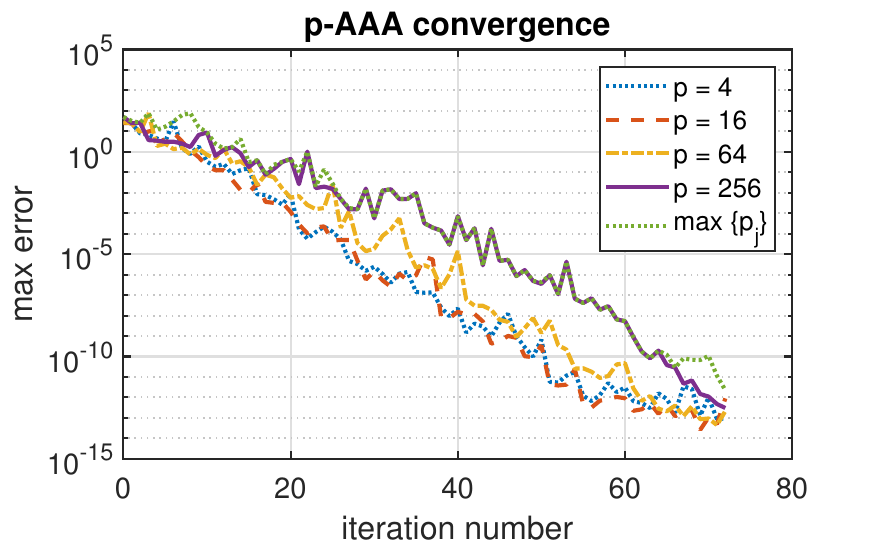}
\caption{Example \ref{ex:convergence}: Convergence of \aaa compared with \paaa.}
\end{figure}
In this section we demonstrate the convergence behavior of \paaa using a general multivariate function, not related to dynamical systems. To do so we consider an example from \cite{Na18AAA} where the goal is to approximate the function $\tan(ps)$. We take $N=1000$ equispaced sample points on the unit circle for the $s$ variable and 
a set of $9$ parameter samples $\{2^0,2^1,\ldots,2^8\}$.
For a comparison, (the single variable) \aaa has been executed for $p=4,16,64,256$ 
We note that \aaa has been run for every $p$ value separately. This is in contrast to \paaa where \paaa is run only once and the resulting parametric approximant can be used for any given parameter value. 
For both algorithms a relative error tolerance of $10^{-13}$ was used. The parametric rational approximant computed by \paaa after $73$ iterations is of order $(70,8)$.
In \Cref{fig:convergence} we illustrate the differences in the convergence behaviour of \aaa and \paaa implementations. The left-hand side plot in \Cref{fig:convergence} depicts maximum errors over all sampled $s$-variables during individual (single variable) \aaa runs for the four parameter choices of  $p=4,16,64,256$.  The right-hand side plot in \Cref{fig:convergence} shows  the maximum error  with respect to all sampled $s$ and $p$ values, denoted by the legend ``$\max\{p_j\}$"
(corresponding to the error in Line~\ref{lst:line:convergencecriterion} of \Cref{alg:p-AAA} used as a convergence criterion).  During the \paaa implementation, we also monitor the maximum $s$-errors corresponding to the  $p=4,16,64,256$ samples. We emphasize that these errors values for specific $p$ values are not part of the \paaa stopping criterion. \paaa only monitors the maximum error over all the $s$ and $p$ samples. These are computed here only for comparison purposes.  \Cref{fig:convergence} illustrates that \aaa convergence speed varies with the magnitude of $p$ (faster convergence for the smaller $p$ values) whereas in \paaa errors decrease uniformly across the parameter set mainly dictated by the hardest case. Overall, \paaa needs more iterations to converge than \aaa for a given fixed parameter. However, as mentioned above, we  need to run the parametric algorithm only once in order to obtain a single approximating function for all four rational functions computed by individual \aaa runs. This example demonstrates that \paaa is a viable choice in the general multivariate rational approximation setting and by no means restricted to the approximation of system dynamics in the frequency domain.
}

\subsection{\paaa for more than two parameters}
\label{sec:multiparam}
The \paaa algorithm extends analogously to the cases with more than two variables. To keep the discussion concise, we briefly highlight the three-variable case.

In this case,  the underlying (transfer) function to approximate, $H(s,p,\phat)$, is a function of the three variables, $s,p,$ and $z$, and we 
assume access to the sampling data 
\begin{equation}
    \label{eq:prob3d}
    h_{ij\ell} = H (s_i,p_j,\phat_\ell) \in \C
   ~~\mbox{for}~~ i = 1, \dots, N,
   ~~j = 1, \dots, M,
   ~~\mbox{and}
   ~~\ell = 1, \dots, O.
\end{equation}
The approximant $\Hr(s,p,\phat)$ is represented in the 
barycentric form given by
\begin{equation} \label{Hrspz}
    \widetilde{H} (s,p,\phat) =
	\sum_{i=1}^k \sum_{j=1}^q \sum_{\ell=1}^o
	\frac{ \beta_{ij\ell} }{ (s-\s_i) (p-\p_j)(\phat-\z_\ell)}
	\! \biggn/ \!
	\sum_{i=1}^k \sum_{j=1}^q \sum_{\ell=1}^o
	\frac{ \alpha_{ij\ell} }{ (s-\s_i) (p-\p_j) (\phat-\z_\ell)},
\end{equation}
where $\{ \s_i \}$,
	$\{ \p_j \}$, and $\{ \z_\ell\}$
	are to-be-determined sampling points, subsets of
	$\{s_i\}$, $\{p_j\}$, and $\{\phat_\ell\}$, respectively. As in the 
	two-variable case, $\beta_{ij\ell}$ will be chosen 
	to enforce interpolation in a subset of the data 
	and  $\alpha_{ij\ell}$ to minimize a linearized LS error in the remaining data.

In accordance with the data \cref{eq:prob3d} and the approximant
$\Hr(s,p,z)$, partition the sampling points: 
\begin{align}
        \begin{split}
        \label{eq:3ddata}
        [s_1, \dots, s_N] & = [ \s_1, \dots, \s_k ] \cup 
		    [ \sh_1, \dots, \sh_{N-k} ]  = 
		    [\bs~|~\bsh],
		    \\
      [p_1, \dots, p_M] & = [\p_1, \dots,\p_q ] \cup 
            [\ph_1, \dots, \ph_{M-q}] 
             = 
		    [\boldp~|~\bph],~\mbox{and} \\
      [\phat_1, \dots, \phat_O] & = [ \z_1, \dots, \z_o]~ \cup~ 
            [\zh_1, \dots, \zh_{O-o}] 
             = 
		    [\bzeta~|~\bzetah]. \\
        \end{split}
    \end{align}	

Then, \paaa imposes interpolation on the samples $\{\bs,\boldp,\bzeta\}$ by setting
\begin{equation} \label{betaijl}
    \beta_{ij\ell} = H ( \s_i, \p_j, \z_\ell ) \alpha_{ij\ell},~~
    \mbox{for}~i=1,\ldots,k,~j=1,\ldots,q,~\mbox{and},~\ell=1,\ldots,o.
\end{equation}
Based on the partitioning \cref{eq:3ddata}, consider the data as a three-dimensional tensor. We enforce interpolation in the $(1,1,1)$ block of this tensor with the choice in \cref{betaijl}. Then, \paaa minimizes the linearized LS error in the rest of the data by choosing the remaining coefficients
$\ba = [\alpha_{111} \cdots \alpha_{11o} | \alpha_{121} \cdots \alpha_{12o} | \cdots | \alpha_{kq1} \cdots \alpha_{kqo} ]^\top$
via the linear  LS problem 
${\displaystyle \min_{\|\ba\|_2=1} \| \Loew_3 \ba \|_2}$
where $\Loew_3$ is the 3D Loewner matrix, which plays the same role the 2D Loewner matrix $\Loew_2$ played in \Cref{sec:aaaparam}. Partioning of  the data in \cref{eq:3ddata} is 
automatically established via the greedy search in every step. 

Generalization to functions of more than three variables follows analogously. We skip those details due to cumbersome notation. 
However the potential computational difficulties with the increasing number of variables is worth elaborating. Assume that at the current step of \paaa, we have the approximant $\Hr(s,p,z)$ as in \cref{Hrspz}.
Given the sampling data in \cref{eq:prob3d}, this will result in $\Loew_3$ having $NMO-kqo$ rows and $kqo$ columns. 
Therefore computing the coefficient vector $\ba$ becomes more expensive as the number of variables (and the orders in each variable) increase. For functions with many variables, if the coefficient matrix becomes prohibitively  large to compute $\ba$ via direct methods, one might revert to well-established iterative approaches. For the numerical examples we considered in this paper, these computational complications did not arise and direct methods were readily available to apply.

{
\subsubsection{Parameterized Gyroscope Model}
\label{sec:gyro}
In this section, we use \paaa to approximate the dynamics of a microelectromechanical system (MEMS) gyroscope. The benchmark is  available through \cite{morwiki_modgyro} and further information regarding the background as well as the the operation principle of the MEMS gyroscope are discussed in \cite{morMoo07}. Similar to the example in \Cref{sec:sriram}, the time-domain description of the system is given by the second-order model
$$
\bM(p) \ddot{\bx}(t,p,z) + \bG(p,z) \dot{\bx}(t,p,z) + \bK(p) \bx(t,p,z) = \bb,~~~y(t,p,z) = \bc^\top \bx(t,p,z),
$$
where the mass matrix $\bM(p) = \bM_1 + p\bM_2$, damping matrix $\bG(p,z)=z(\bG_1+p\bG_2)$ and stiffness matrix $\bK(p) = \bK_1 + \frac{1}{p} \bK_2 + p\bK_3$ are defined with respect to the structural parameter $p$ and the rotation velocity $z$. We use \paaa to approximate the corresponding three-variable transfer function
$$H(s,p,z) = \bc^\top(s^2 \bM(p) + s \bG(p,z) + \bK(p))^{-1}\bb,$$
in the operating frequency range of the device, which corresponds to $s\in [2\pi \times 0.025, 2\pi \times 0.25 ]\imath$. For this example we chose to sample $100$ linearly spaced frequencies in the aforementioned interval as well as $10$ linearly spaced points in $[1,2]$ for the $p$ parameter and $10$ logarithmically spaced points in $[10^{-7},10^{-5}]$ for the $z$ parameter. After $33$ iterations of \paaa we obtain an approximant with order $(k,q,o) = (62,7,9)$ and a maximum relative error of $8.9 \times 10^{-4}$ throughout the sampled domain. 
\Cref{fig:gyro} depicts the transfer function  $H(s,p,z)$ for multiple unsampled parameter values. The frequency response drastically varies for different parameters, thus making it a function which is difficult to approximate. This may partially be due to the non-linear parameter dependence of the matrix $\bK(p)$. In spite of these difficulties, \paaa is able to produce good approximations for most parameters in the intervals of interest.
\begin{figure}[htbp]
\includegraphics[width=1\textwidth]{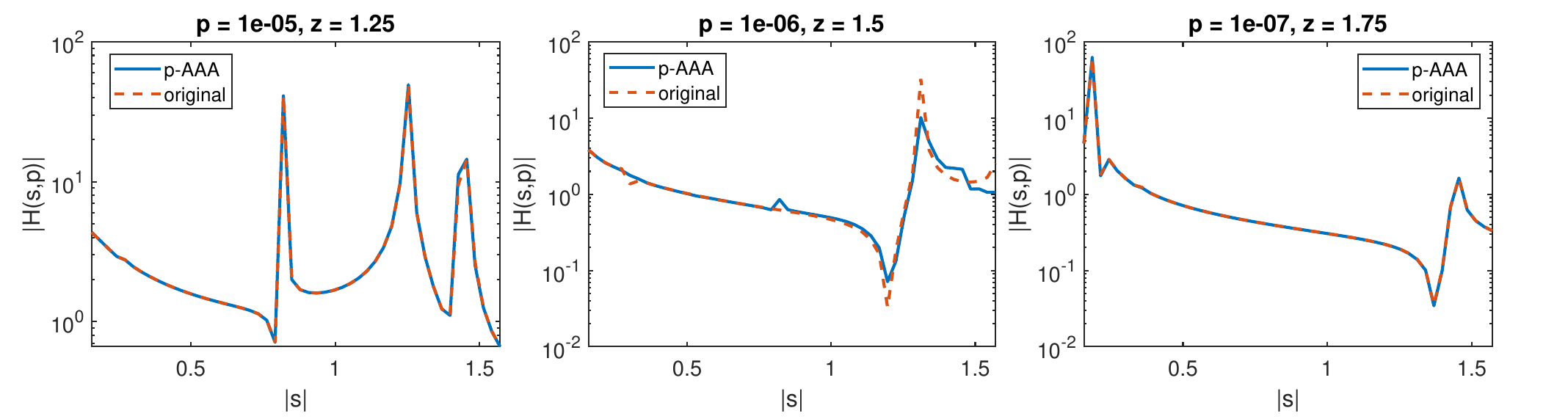} %\\
\caption{Example \ref{sec:gyro}: \paaa approximation of gyroscope model for various parameter combinations.}
\label{fig:gyro}
\end{figure}
}
\section{\paaa for matrix-valued functions}

\label{subsec:mimoAAAp}

So far, we have considered approximating scalar-valued functions $H(s,p)$. In this section, we  discuss \paaa for approximating matrix-valued functions instead. This is a common situation, especially arising in the case of dynamical systems where the underlying system has multiple-inputs and multiple-outputs (\textsf{MIMO}), leading to matrix-valued transfer functions.
Motivated by our interest in approximating dynamical systems,  we will call the resulting method \mvpaaa. To keep the notation concise, we will present the discussion for the two-variable case. But as in
\Cref{sec:multiparam}, the results similarly extend to higher-dimensional parametric problems. 

Let $\bH(s,p)$ denote the underlying
\textsf{MIMO} (transfer) function with $\nin$ inputs and $\nout$ outputs. Therefore, for the sampling points $\{s_i\}_{i=1}^N$
and $\{p_j\}_{j=1}^M$, we
have access to the matrix-valued sampling data:
\begin{equation} \label{mimodata}
    \bH_{ij} = \bH (s_i,p_j) \in \C^{\nin \times \nout} ~~
    \mbox{for}~
    i = 1,\dots,N ~~\mbox{and}~~
    j = 1, \dots, M.
\end{equation}
From the data 
\cref{mimodata},
the goal is to construct a high-fidelity, matrix-valued approximant $\bHr(s,p)$ to $\bH(s,p)$. 

\subsection{Transformation to  scalar-valued data}

For the single-variable (nonparametric case), 
one solution to handle the matrix-valued data in \aaa is to vectorize every sample and replace the scalar data forming the Loewner matrix $\Loew$ with the vectorized data. This is closely related to the approach 
proposed in
Lietaert \emph{et al.} \cite{lie18auto} for using \aaa in nonlinear eigenvalue problems. It is also analogous to how \vf handles \textsf{MIMO} problems. One potential disadvantage of this approach is that, in the case of large number of inputs and outputs, the resulting  Loewner matrix will have large dimensions, leading to a computational expensive LS step. Exploiting the fact that 
only certain rows and columns of the underlying Loewner matrix change in every step,  \cite{lie18auto} partially 
alleviates this computational complexity. However, for the parametric problems we consider here,  dimension growth due to vectorization is more prominent and we will adopt another approach introduced by \cite{El17conversions} for the nonparametric case, which transforms the \textsf{MIMO} data to a scalar one, and apply \aaa to this scalar-valued data.  We will extend this approach to  parametric problems and establish what it means, for \mvpaaa,   in terms of  interpolation and the LS minimization.

As in the scalar case, assume the  partitioning of the  data in
\cref{mimodata} as follows:
\begin{align}
        \begin{split}
        \label{mimodatapart}
        \{s_1, \dots, s_N\} & = \{ \s_1, \dots, \s_k \} \cup 
		    \{ \sh_1, \dots, \sh_{N-k} \}  \eqdef 
		    \{\bs~\cup~\bsh\},
		    \\
      \{p_1, \dots, p_M\} & = \{\p_1,\dots,\p_q\} \cup 
            \{\ph_1, \dots, \ph_{M-q}\} 
             \eqdef 
		    \{\boldp~\cup~\bph\},~\mbox{and} \\
      &
      \renewcommand{\arraystretch}{1.2} \left[ \begin{array}{c|c}
		[ \bH (\s_i,\p_j) ] & [ \bH (\s_i,\ph_j) ] \\\hline
		[ \bH (\sh_i,\p_j) ] & [ \bH (\sh_i,\ph_j) ]
		\end{array} \right] \renewcommand{\arraystretch}{1}
		\eqdef
		\renewcommand{\arraystretch}{1.3}
		\Dpart.
		\renewcommand{\arraystretch}{1}
        \end{split}
    \end{align}	
    This partitioning will be determined by applying \paaa to a scalar data set described below.
In accordance with this partitioning, we want to construct $\bHr (s,p)$ with the matrix-valued barycentric form
\begin{equation}
    \label{eq:Hmimo}
    \bHr (s,p) =
    \frac{\bN (s,p) }{ d (s,p) } = 
    \sum_{i=1}^k \sum_{j=1}^q 
	\frac{ \bB_{ij} }{ (s-\s_i) (p-\p_j) }
	\biggn/
	\sum_{i=1}^k \sum_{j=1}^q 
	\frac{ \tilde{\alpha}_{ij} }{ (s-\s_i) (p-\p_j) },
\end{equation}
where $ \bB_{ij} \in \C^{\nin \times \nout} $ and $\tilde{\alpha}_{ij} \in \C$ are to be determined. 

Motivated by \cite{El17conversions} for the nonparametric case, we convert the matrix-valued data \cref{mimodata} to the scalar one by picking two random unit vectors $ \bw \in \C^{\nout}$ and $\bv \in \C^{\nin}$, and computing 
\begin{equation}
    \label{eq:datamimo}
   h_{ij} = \bw^\top \bH(s_i,p_j) \bv ~~~\mbox{for}~~
    i = 1,\dots,N ~~\mbox{and}~~
    j = 1,\dots,M.
\end{equation}
We apply \paaa to the scalar data \cref{eq:datamimo} to obtain the 
scalar-valued rational approximation,
as in \cref{eq:Hrbary}:
\begin{equation}
\label{mvpaaaHr}
    \Hr (s,p) = \frac{n(s,p)}{d(s,p)} = 
    \sum_{i=1}^k \sum_{j=1}^q 
	\frac{ \left( \bw^\top \bH (\s_i,\p_j) \bv \right) \alpha_{ij} }{ (s-\s_i) (p-\p_j) }
	\biggn/
	\sum_{i=1}^k \sum_{j=1}^q 
	\frac{ \alpha_{ij} }{ (s-\s_i) (p-\p_j) }.
\end{equation}
Note that $\beta_{ij} = \bw^\top \bH (\s_i,\p_j) \bv \alpha_{ij} $. Then, the final matrix-valued approximant $\bHr(s,p)$
 is obtained by setting
 $\tilde{\alpha}_{ij} = \alpha_{ij}$
 and $ \bB_{ij} = \alpha_{ij} \bH (\s_i,\p_j) $  in \cref{eq:Hmimo}, resulting in 
 \begin{equation}
    \label{eq:Hmimofinal}
    \bHr (s,p) =
    \frac{\bN (s,p) }{ d (s,p) } = 
    \sum_{i=1}^k \sum_{j=1}^q 
	\frac{ \bH_{ij} \alpha_{ij}}{ (s-\s_i) (p-\p_j) }
	\biggn/
	\sum_{i=1}^k \sum_{j=1}^q 
	\frac{ {\alpha}_{ij} }{ (s-\s_i) (p-\p_j) }.
\end{equation}
As in the scalar \paaa case, by construction, 
our choice of $\bB_{ij}$  guarantees interpolation of the data for the samples $\{\bs,\boldp\}$ in 
\cref{mimodatapart}.
However, the (linearized) LS minimization is different. We  summarize these results next.
\begin{prop} \label{mimopaaaresult}
Given the sampling data
 \cref{mimodata}, let 
$\bHr(s,p)$ in \cref{eq:Hmimofinal} be the resulting approximant obtained via \mvpaaa with 
$\alpha_{ij} \neq 0$ and with
the corresponding data partitioning 
\cref{mimodatapart}.
Then, $\bHr (s,p)$  interpolates the data in
 $\F$ corresponding to the samples
$\{\bs,\boldp\}$, i.e.,
    \begin{equation} 
    \label{mimointerpolation}
    \bHr (\s_i,\p_j) = 
\bH (\s_i,\p_j)~~\mbox{for}~~i=1,\ldots,k~~\mbox{and}~~j=1,\ldots,q. \end{equation}
Furthermore, $\bHr (s,p)$ 
minimizes an 
input/output weighted linearized LS measure, namely
\begin{equation}  \label{weightedls}
    \bHr = 
    \argmin_{\hat{\bH} = \bN/d} 
     \sum_{i,j} \left| 
    \bw^\top \big( 
    \bH(s_i,p_j) d (s_i,p_j) - \bN (s_i,p_j) \big) \bv \right|^2
\end{equation}
for the data in $\{\Fhs,\Fhp,\Fh\}$, not selected by the greedy search, i.e., corresponding to the sampling pairs 
   $(s_i,p_j)\in \big\{
   \{\bsh,\boldp\} \cup
    \{\bs,\bph\} \cup
   \{\bsh,\bph\}
   \big\}$.
\end{prop}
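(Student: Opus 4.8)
\emph{Proof plan.} The statement bundles two claims — the interpolation identity \eqref{mimointerpolation} and the weighted least-squares characterization \eqref{weightedls} — and I would treat them separately, reusing the scalar analysis of \Cref{cor:pAAA} in both. For the interpolation claim I would argue exactly as in the scalar \Cref{cor:pAAA}(a). Fix a support pair $(\s_a,\p_b)$ and multiply both the numerator $\bN(s,p)$ and the denominator $d(s,p)$ in \eqref{eq:Hmimofinal} by $(s-\s_a)(p-\p_b)$. Every term with $(i,j)\neq(a,b)$ then carries a factor $(s-\s_a)(p-\p_b)$ and vanishes as $(s,p)\to(\s_a,\p_b)$, while the $(a,b)$ term survives; since $\alpha_{ab}\neq 0$ by hypothesis, the removable singularity evaluates to $\bHr(\s_a,\p_b)=\bH_{ab}\alpha_{ab}/\alpha_{ab}=\bH(\s_a,\p_b)$. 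This is the matrix-valued analogue of the scalar removable-singularity computation already used to justify \eqref{eq:int2}.

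The least-squares claim is where the construction \eqref{eq:datamimo}--\eqref{eq:Hmimofinal} does the work. The key observation is that the scalar \paaa approximant \eqref{mvpaaaHr} and the matrix approximant \eqref{eq:Hmimofinal} share the \emph{same} denominator $d(s,p)$ (they use the same coefficients $\alpha_{ij}$), and that their numerators are related by $\bw^\top\bN(s,p)\bv = n(s,p)$, since $\bw^\top\bH(\s_i,\p_j)\bv$ is precisely the scalar weight multiplying each barycentric basis function. Consequently, at every sampling pair the scalar linearized residual factors through the weight vectors,
\[
  h_{ij}\,d(s_i,p_j)-n(s_i,p_j) = \bw^\top\big(\bH(s_i,p_j)\,d(s_i,p_j)-\bN(s_i,p_j)\big)\bv,
\]
using $h_{ij}=\bw^\top\bH(s_i,p_j)\bv$ from \eqref{eq:datamimo}. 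Squaring and summing over the uninterpolated samples then shows that the scalar linearized objective minimized by \paaa (\Cref{cor:pAAA}(b)) is \emph{termwise identical} to the weighted matrix objective in \eqref{weightedls}.

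To close the argument I would make explicit that both minimization problems are parametrized by the single vector $\ba=[\alpha_{ij}]$: once interpolation is imposed, the scalar numerator coefficients are $\beta_{ij}=h_{ij}\alpha_{ij}$ and the matrix numerator coefficients are $\bB_{ij}=\bH(\s_i,\p_j)\alpha_{ij}$, so each admissible $\bHr=\bN/d$ corresponds to exactly one admissible $\Hr=n/d$ and vice versa. Because the two objectives agree as functions of $\ba$ and \paaa returns the scalar minimizer by \Cref{cor:pAAA}(b), the same $\ba$ minimizes \eqref{weightedls}, which is the claim. The main thing to get right — and the only genuine subtlety — is this equivalence of feasible sets: I must verify that restricting $\bN$ to the interpolatory barycentric form with $\bB_{ij}=\bH(\s_i,\p_j)\alpha_{ij}$ is exactly what makes the matrix minimization equivalent to the scalar one, and that the linearization step (the $\rightsquigarrow$ passage from the true residual to $\bH d-\bN$) is carried out identically in both settings, so that no competitor is lost or gained under the map $\bN\mapsto\bw^\top\bN\bv$.
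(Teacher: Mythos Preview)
Your proposal is correct and follows essentially the same route as the paper: the interpolation claim is handled via the removable-singularity argument, and the least-squares claim is obtained by noting $\bw^\top\bN(s,p)\bv=n(s,p)$ so that the scalar linearized residual equals the weighted matrix residual termwise, whence the minimizer from \Cref{cor:pAAA}(b) carries over. Your added remark about the equivalence of feasible sets (both objectives being parametrized by the same $\ba$) is a welcome bit of extra care that the paper leaves implicit.
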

\begin{proof}
Interpolation property 
\cref{mimointerpolation} follows analogous to the scalar case, by observing that
for $\alpha_{ij} \neq 0$, $\bHr(s,p)$ has a removable pole at each $(\s_i,\p_j)$ with
    \[ \bHr (\s_i,\p_j) = 
    \frac{ \bB_{ij} }{ \alpha_{ij} } . \]
    Then, the choice $\bB_{ij} = \alpha_{ij} \bH_{ij}$ proves
    \cref{mimointerpolation}.
    
To prove \cref{weightedls}, first recall that 
$\Hr(s,p)$ in \cref{eq:datamimo} is obtained by applying (scalar-valued) \paaa to the data \cref{eq:datamimo}. 
Therefore, by \cref{cor:pAAA},
\begin{equation} \label{lsmiddle}
    \Hr = 
    \argmin_{\hat{H} = d/n} 
    \sum_{i,j} \mid \bw^\top \bH(s_i,p_j) \bv d (s_i,p_j) - n (s_i,p_j) \mid^2.
\end{equation}
Using
\cref{mvpaaaHr} and \cref{eq:Hmimofinal}, we have
$
    \Hr (s,p) = \frac{n(s,p)}{d(s,p)}= \bw^\top \bHr (s,p) \bv 
    = \frac{\bw^\top \bN(s,p) \bv}{d(s,p)}.
$
Therefore, 
\[
    \bw^\top \bH (s_i,p_j)\bv d (s_i,p_j) - n (s_i,p_j)
    =
    \bw^\top \big( 
    \bH (s_i,p_j) d (s_i,p_j) - \bN (s_i,p_j) 
    \big) \bv.
\]
Inserting  this last equality into \cref{lsmiddle} proves 
\cref{weightedls}.
~
\end{proof}
\begin{remark}
\Cref{mimopaaaresult} states that  for \mvpaaa,  interpolation holds analogously to the scalar case.
However, the LS minimization differs from the scalar case in that what is minimized is a weighted LS measure. More precisely, in terms of the LS aspect of \mvpaaa, the linearization is performed 
on the weighted error $\bw^\top(\bH(s,p)-\bHr(s,p))\bv$.
\end{remark}
\begin{remark} 
When the internal description of the underlying (transfer) function is available, as in \cref{paramlti} and \cref{Hinss}, projection-based approaches are commonly used to construct interpolatory parametric approximants
\cite{baur11interpolatory,benner15survey,AntBG20}. 
In this setting, for \textsf{MIMO} systems, one usually does not enforce full matrix interpolation. 
Instead, interpolation is enforced along selected \emph{tangential directions}. In other words, one picks vectors $\bw_i \in \C^{\nout}$ and $\bv_i \in \C^{\nin}$ such that 
$
\bH(\s_i,\p_j) \bv_i = \bHr(\s_i,\p_j)\bv_i
$
and/or 
$
\bw_i^\top \bH(\s_i,\p_j) = \bw_i^\top \bHr(\s_i,\p_j)
$. This is called tangential interpolation. Tangential vectors  usually vary with the sampling points. At this point, it is not clear, at least to us, how to achieve  tangential interpolation using the barycentric form \cref{eq:Hmimo}. However, inspired by this concept, 
instead of choosing two fixed vectors $\bw$ and $\bv$, 
one could pick different
vectors $\bw_i$, and $\bv_i$ for each sample $\s_i$, for example and apply \mvpaaa to the data $\bw_i^\top\bH_{ij}\bv_i$ to build the \textsf{MIMO} approximation \cref{eq:Hmimofinal} as above.
The resulting model $\bHr(s,p)$ would still interpolate the data $\F$ and minimize the LS error along varying weighted directions. In our experiments (see \Cref{ex:jiahua}),
fixed vectors $\bw$ and $\bv$ provided accurate approximations and therefore 
we do not pursue the idea of choosing different vectors here. The interpolatory parametric-Loewner approach \cite{Io14data}
 handles the vector-valued problems, i.e., $\bH(s_i,p_j) \in \C^{\nout \times 1}$, in a similar manner by choosing 
$\bw$ as vector of ones (and $\bv = 1$ since $\nin=1$). {Moreover, recently \cite{gosea2021} developed the block-AAA algorithm, which uses a generalized barycentric formula with matrix-valued weights.  Further extending that theory to parametric problems could offer different avenues to handle the parametric matrix-valued problems. Extending the framework of ~\cite{monzon2020multi} to parametric \textsf{MIMO} problems might also provide potential directions.  These issues will be investigated in future works.}

\end{remark}

\subsection{Numerical Examples: Stationary PDEs}
\label{ex:jiahua}
We consider two examples from \cite{chen19robust}. First is the following stationary PDE, briefly mentioned in \Cref{sec:intro}:
\begin{align} 
\label{eq:pde1}
    u_{xx}+p u_{yy} + \phat u 
    & = 10 \sin(8x(y-1)) 
    \quad \textup{on } \Omega = [-1,1]\times[-1,1],
\end{align}
with homogeneous Dirichlet boundary conditions.
The solution $u(x,y)$ depends on two the parameters $(p,\phat)$ and is independent of time. Therefore, the model is not a dynamical system, unlike our previous examples, yet this does not matter for our formulation {since} we simply view the solution as a function of two-variables. 
The \emph{truth model} is obtained via a spectral Chebyshev collocation approximation with 49 nodes in each direction.
We choose to approximate $u(x,y)$ on the whole domain $\Omega$; thus the output is the full solution, leading to a two-variable vector-valued function to sample $\bH(p,z) \in \R^{2401\times 1}$. For our \mvpaaa terminology, we interpret this as a model with $\nin=1$ and $\nout=2401$.
We take $N=M=10$ linearly spaced measurements 
of $\bH(p,z)$ in the parameter space $[0.1,4]\times[0,2]$. {
In \cref{eq:datamimo}, we set $\bw = \widetilde{\bw}/\lVert \widetilde{\bw} \rVert_2$ where the entries of $\widetilde{\bw}\in \R^{2401}$ result from a standard normal distribution. Also, $\bv = 1$ in this example.}
The usual projection-based approaches to PMoR would form a global basis from these samples and project the truth model into a low-dimensional space. However, we do not assume access to the truth model; but only its samples via black-box simulation, and construct our approximation directly from samples. \mvpaaa leads to an approximation with orders $q=3$ in $p$ and $o=3$ in $z$. 
To judge the quality of the approximation, 
we perform a parameter sweep in the full parameter domain and find the worst case scenario in terms of the maximum  error between  the truth model and the \mvpaaa approximation over $\Omega$. The worst-case approximation occurs for $p=1.7545$ and $z=2$, with an error of
$3.11\times10^{-2}$, showing that the \mvpaaa approximant is accurate even in the worst-case. 
This 
worst case scenario is depicted in 
 the \emph{left-pane} of \Cref{fig:jiahuaWorst} where the top-plot shows the truth model, 
the middle one the \mvpaaa approximation, and the bottom one the error plot. As the figure illustrates, 
\mvpaaa is able to recover the solution on the whole domain accurately.

We also apply \mvpaaa to a slightly revised PDE from \cite{chen19robust}:
\begin{align} \label{eq:pde2}
    (1+px)u_{xx} + (1+\phat y)u_{yy} = e^{4xy} \qquad \textup{on } \Omega = [-1,1]\times[-1,1].
\end{align}
The set-up is the same as above: 
    Dirichlet boundary conditions and
    the \emph{truth model} obtained via Chebyshev collocation, with 
    49 nodes in each direction,
leading to a two-variable vector-valued function to sample $\bH(p,z) \in \R^{2401\times 1}$.
We sample $\bH(p,z)$ at $N=M=10$ linearly spaced points in the parameter domain $ (p,\phat) \in [-0.99,0.99]\times[-0.99,0.99] $ and apply \mvpaaa. {We set
$\bw = \widetilde{\bw}/\lVert \widetilde{\bw} \rVert_2$ where 
 the entries of $\widetilde{\bw}$ result from a uniform random distribution.} As stated in \cite{chen19robust}, 
this problem is harder to approximate than the first one due to near singularities at the corners of the parameter domain.
This is automatically reflected in the approximation orders 
\mvpaaa chooses: {$q=5$} in $p$ and {$o=6$} in $z$.
As for the first PDE, we perform a parameter sweep in the full parameter domain to find the worst-case performance. In this case, the worst approximation occurs for $p=0.95$ and $z=0.99$, with an error of
$7.28\times10^{-2}$, an accurate approximation even in the worst case. We show the results from this 
worst case in
 the \emph{right-pane} of \Cref{fig:jiahuaWorst} where the top-plot shows the truth model, 
the middle one the \mvpaaa approximation, and the bottom one the error plot. 
As in the previous case, 
\mvpaaa accurately captures the full solution.

\begin{figure}[t]
    \centering
     \includegraphics[width=0.45\textwidth]{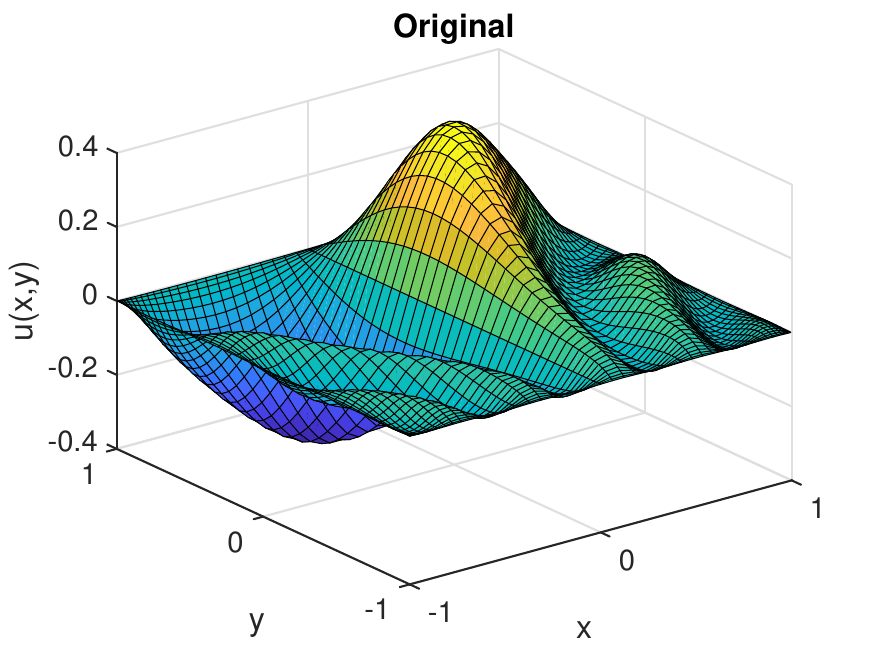}
     \includegraphics[width=0.45\textwidth]{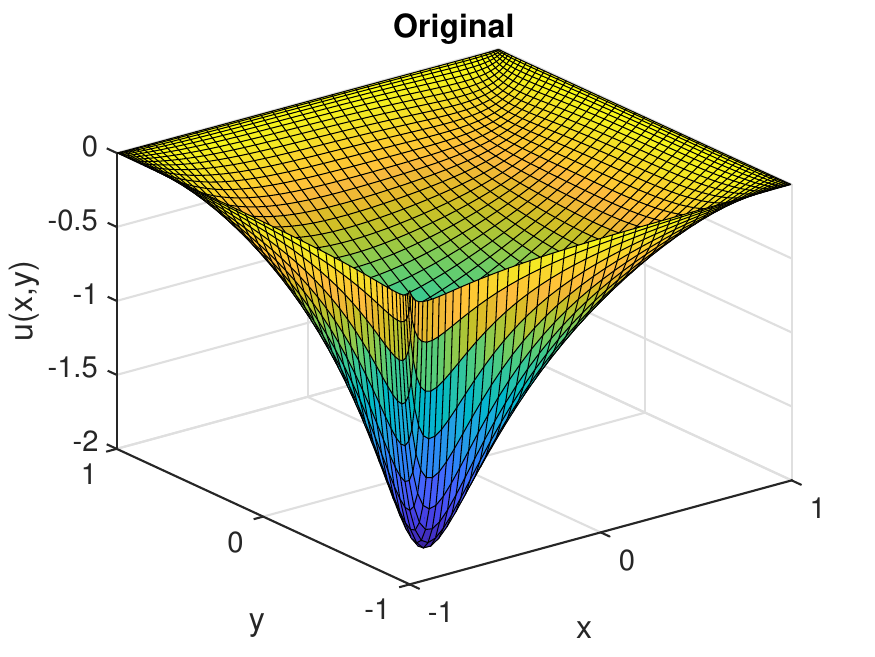} \\
     \includegraphics[width=0.45\textwidth]{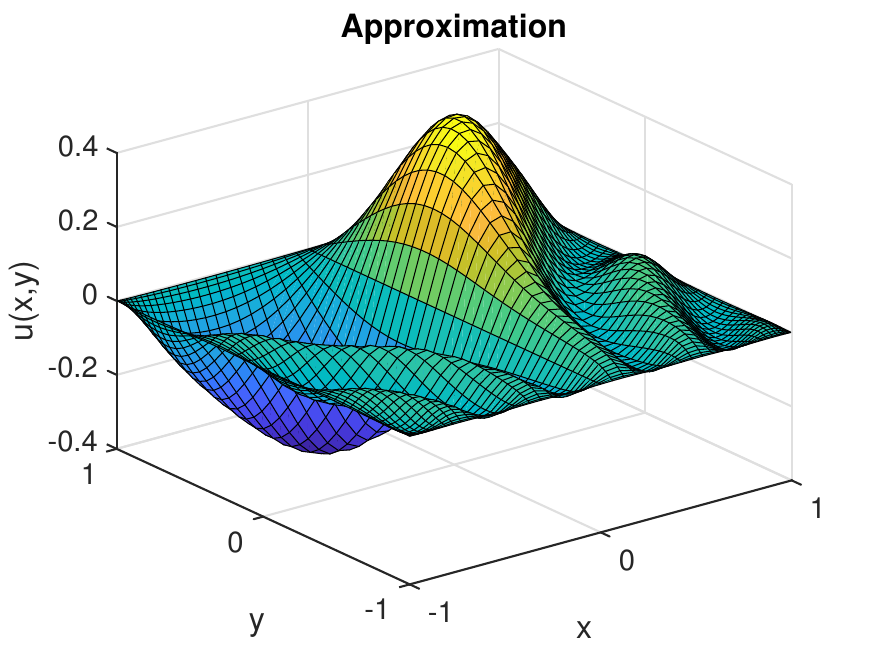}
     \includegraphics[width=0.45\textwidth]{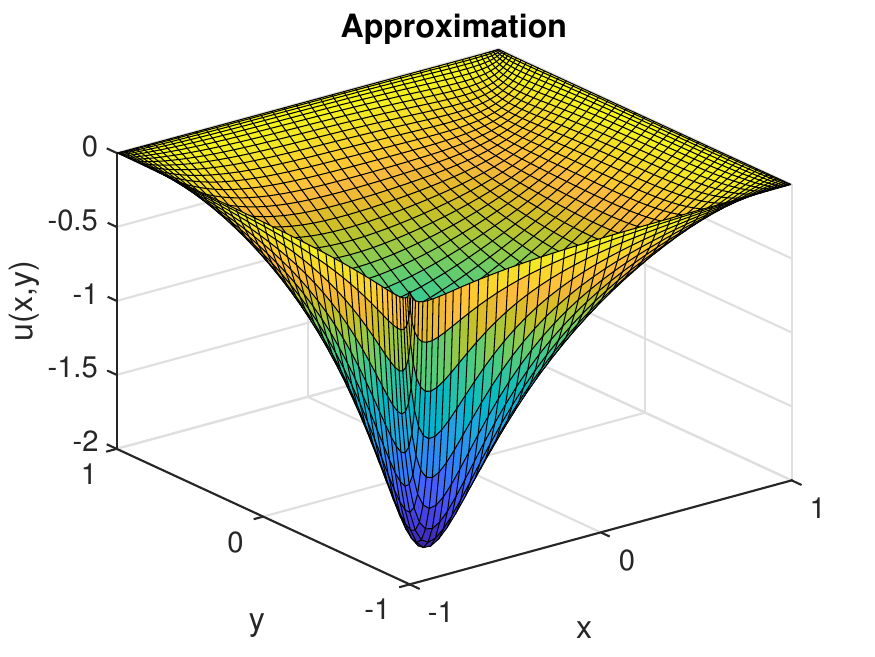} \\
     \includegraphics[width=0.45\textwidth]{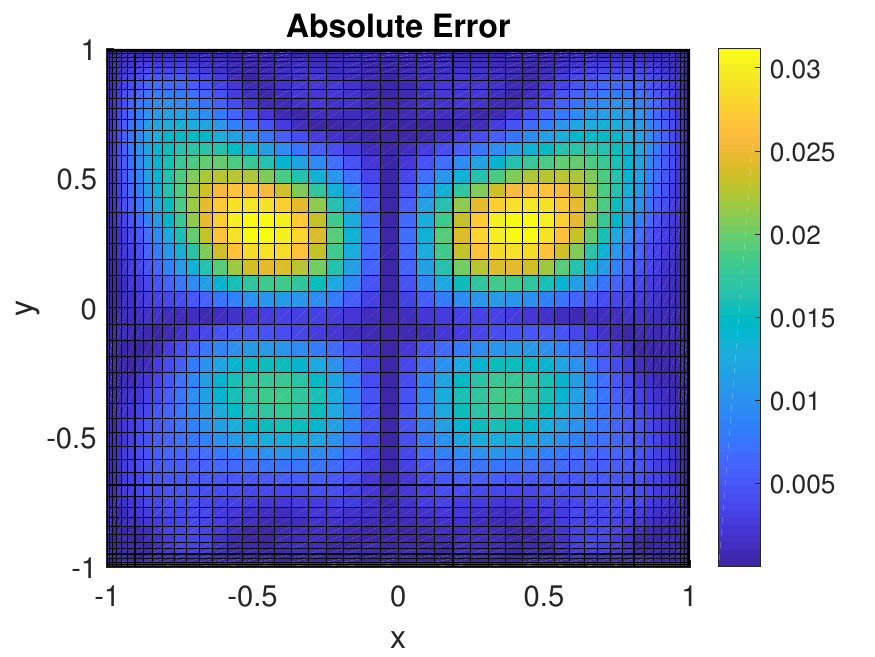}
     \includegraphics[width=0.45\textwidth]{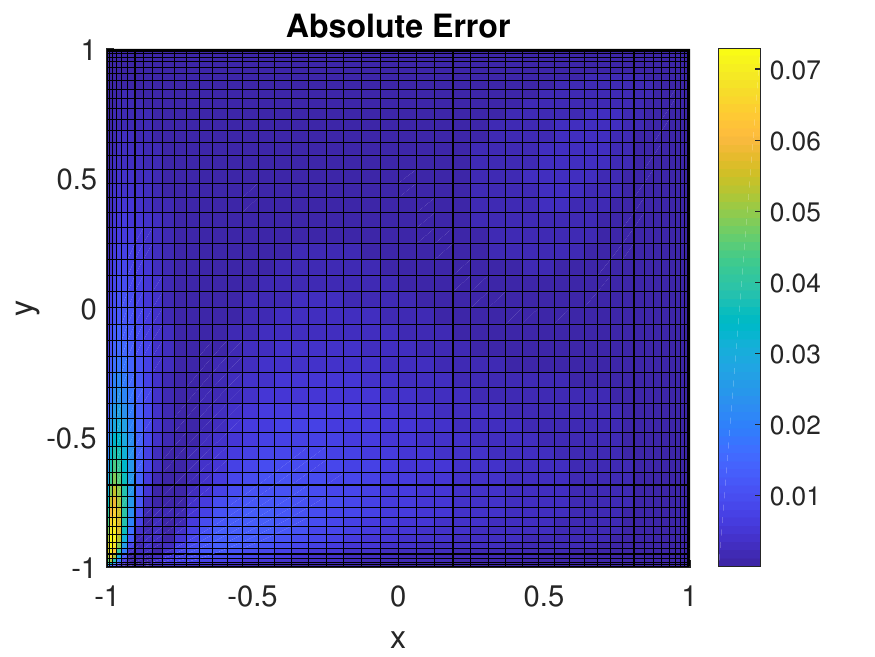} \\
    \caption{{Example \ref{ex:jiahua}. \mvpaaa approximations for two PDEs: left-pane for the PDE in~\eqref{eq:pde1} for $p=1.7545$ and $z=2$ and  right-pane for the PDE in~\eqref{eq:pde2} for $p=0.95$ and $z=0.99$)}
    }
    \label{fig:jiahuaWorst}
\end{figure}

\section{Conclusions}
\label{sec:conclusions}
We have presented a data-driven modeling framework for approximating parametric (dynamical) systems by extending the \aaa algorithm to multivariate problems. The method does not require access to an internal state-space description and works with function evaluations. We have discussed the scalar-valued problem as well as the matrix-valued ones. Various numerical examples have been used to illustrate the effectiveness of the proposed approach.  

\section*{Acknowledgements}
We thank Thanos Antoulas and Cosmin Ionita for providing their code for computing the parametric Loewner approximant. 
We also thank Vijaya Sriram Malladi
for providing the parametric beam model studied in \Cref{sec:sriram}.

\bibliographystyle{siamplain}
\bibliography{references}

\appendix
\section{State-space realization} \label{appendix:realization}
{
First, we recall the formulae derived in \cite{Io14data}, which allow for computing state-space realizations based on a given two-variable barycentric form. In the following, assume that the barycentric form \eqref{eq:Hrbary} computed by the \paaa algorithm is given. Define the parameter dependent terms
\begin{equation*}
\hat{\alpha}_i(p) = \sum_{j=1}^{q+1}\frac{\alpha_{ij}}{p-\pi_j} \quad \text{and} \quad \hat{\beta}_i(p) = \sum_{j=1}^{q+1}\frac{\beta_{ij}}{p-\pi_j},
\end{equation*}
as well as the system matrices 
    \begin{equation*}
    \label{eq:ssrealization}
    s\hbE - \hbA(p) = \left[ \begin{array}{cccc} 
    s-\sigma_1 & \sigma_2-s & & \\
    \vdots & & \ddots & \\
    s-\sigma_1 & & & \sigma_{k}-s \\
    \hat{\alpha}_1(p) & \hat{\alpha}_2(p) & \hdots & \hat{\alpha}_{k}(p)
    \end{array} \right],
    \hbb = \left[ \begin{array}{c} 0 \\ \vdots \\ 0 \\ 1\end{array} \right],
    \hbc(p) = \left[ \begin{array}{c} \hat{\beta}_1(p) \\ \hat{\beta}_2(p) \\ \vdots \\  \hat{\beta}_{k}(p) \end{array} \right].
    \end{equation*}
An equivalent representation to the barycentric form \eqref{eq:Hrbary} is then given by
    \begin{equation*}
    \widetilde{H} (s,p) = \hbc(p)^\top (s\hbE - \hbA (p))^{-1} \hbb.
    \end{equation*}
For a detailed discussion regrading the connection between the matrix pencil $s\hbE - \hbA(p)$ and the barycentric form we refer the reader to \cite{Io14data}. Note, that the presented matrices, their dimensions and the type of parameter dependence are not unique. For example, an equivalent realization without parameter dependence in $\hbc(p)$ but larger matrices was derived in \cite{An12two}.
}
{
\paragraph{Real system matrices}
Whenever complex-valued frequencies are used for generating transfer function samples, the matrix $\hbA(p)$ as well as $\hbc(p)$ are also complex-valued. In \cite{Io14data} the authors demonstrate that under the condition that interpolated complex frequencies exclusively appear as complex conjugate pairs, real-valued system matrices can be computed. Note that $H (\overline{s},p) = \overline{H (s,p)}$ since the underlying system is assumed to be real. This reveals that we can obtain samples from conjugates of complex frequencies without having to compute or measure additional values.
Consider the partitioning \eqref{eq:Lpart} and relabel the frequencies according to the previously mentioned condition:
\begin{equation} \label{eq:realcomplex}
\{s_1, \dots, s_N\} = \{ \s_1, \dots, \s_r, \s_{r+1}, \overline{\s}_{r+1}, \ldots, \s_{r+c}, \overline{\s}_{r+c}\} \cup 
		    \{ \sh_1, \dots, \sh_{N-k} \},
\end{equation}
where $\sigma_1,\ldots,\sigma_r$ are real-valued, $\sigma_{r+1},\ldots,\sigma_{r+c}$ are complex-valued and $k=r + 2c$. First, as done in \cite{Io14data}, consider the case $r \geq 1$ in \cref{eq:realcomplex} and define the matrices
\begin{equation*}
    \bJ = \frac{1}{\sqrt{2}}\left[ \begin{array}{cc}
         1 & 1 \\
         -i & i
    \end{array} \right], \; \bU = \left[ \begin{array}{ccc}
         \bI_{r-1} & & \\
         & \bI_c \otimes \bJ & \\\
         & & 1
    \end{array} \right], \; \bV = \left[ \begin{array}{cc}
         \bI_{r} & \\
         & \bI_c \otimes \bJ
    \end{array} \right].
\end{equation*}
A realization consisting only of real matrices is then given by $\hbb_r = \bU \hbb = \hbb$, $\hbc_r = \hbc \bV^*$ and $s\hbE_r - \hbA_r(p) = s \bU \hbE \bV^* - \bU \hbA(p) \bV^*$, such that
\begin{equation*}
    \widetilde{H} (s,p) = \hbc_r(p)^\top (s\hbE_r - \hbA_r (p))^{-1} \hbb_r.
\end{equation*}
Following \cite{Io14data}, one can write the real system matrices explicitly as
\begin{equation*}
    \hbc_r^\top = \left[\arraycolsep=1pt\begin{array}{c} \hat{\beta}_1(p),\ldots,\hat{\beta}_r(p), \Re(\hat{\beta}_{r+1}(p)),-\Im(\hat{\beta}_{r+1}(p)),\ldots,\Re(\hat{\beta}_{r+c}(p)),-\Im(\hat{\beta}_{r+c}(p))\end{array} \right]
\end{equation*}
and
\begin{equation*}
    s\hbE_r - \hbA_r(p) = \left[ \begin{array}{ccccccc} 
    s-\s_1 & \s_2-s & & & & &\\
    \vdots & & \ddots & & & & \\
    s-\s_1 & & & \s_r -s & & & \\
    \bff & & & & \bg_{r+1} \\
    \vdots & & & & & \ddots & \\
    \bff & & & & & & \bg_{r+c} \\ 
    \hat{\alpha}_{1}(p) & \hat{\alpha}_{2}(p) & \cdots & \hat{\alpha}_{r}(p) & \gamma_{r+1}(p) & \cdots & \gamma_{r+c}(p)
    \end{array} \right],
\end{equation*}
where $\bff = \left[ \begin{array}{c} s - \s_1 \\ 0 \end{array} \right]$, $\bg_i = \left[ \begin{array}{cc} \Re \s_i - s & - \Im \s_i \\ \Im \s_i & \Re \s_i - s \end{array} \right] $ and $\gamma_i^\top = \left[ \begin{array}{c} \Re \hat{\alpha}_i(p) \\ -\Im \hat{\alpha}_i(p) \end{array} \right]$.}

{
In our dynamical system examples, we have $r=0$ in \cref{eq:realcomplex}, i.e., we do not have a real frequency sample. We now provide some modifications to handle this case. For $r=0$, we define the vector $\ell = \frac{1}{\sqrt{2}}\left[ \begin{array}{ccc} 1 & \cdots & 1 \end{array} \right] \otimes \left[ \begin{array}{cc} -1 & 0 \end{array} \right]$ and 
\begin{equation*}
    \bU_0 = \left[ \begin{array}{ccc}
         \frac{i}{\sqrt{2}} & & \\[1ex]
         \ell^\top & \bI_{c-1} \otimes \bJ & \\
         & & 1
    \end{array} \right].
\end{equation*}
We then obtain
    \begin{equation*}
    s\hbE_c - \hbA_c(p) = s \bU_0 \hbE \bV^* - \bU_0 \hbA(p) \bV^* = \left[ \begin{array}{ccccc} 
    \tilde{\bff}_0 & & &\\
    \tilde{\bff} & \bg_2 & &\\
    \vdots & & \ddots &\\
    \tilde{\bff} & & & \bg_{c} \\ 
    \gamma_{1}(p) & \gamma_{2}(p) & \cdots & \gamma_{c}(p)
    \end{array} \right],
    \end{equation*}
where $\tilde{\bff}_0^\top = \left[ \begin{array}{c} \Im(\s_1) \\ \Re(\s_1) - s \end{array} \right]$ and $\tilde{\bff} = \left[ \begin{array}{cc} s-\Re(\s_1) & \Im(\s_1) \\ 0 & 0 \end{array} \right]$. Using $\hbc_r(p)$ and $\hbb_r$ from the previously discussed case we obtain the real state-space form
\begin{equation*}
    \widetilde{H} (s,p) = \hbc_r(p)^\top (s\hbE_c - \hbA_c (p))^{-1} \hbb_r.
\end{equation*}
}

\section{Minimal order interpolant}
\label{appendix:mininterpolant}
{A key result in \cite{Io14data} reveals that the minimal order of a two-variable rational interpolant is given by $(k_*,q_*)$, where
\begin{equation}
\label{eq:minorder}
    k_* = \max_{j=1,\ldots,M} \rk \Loew_{p_j} \quad \text{and} \quad q_* = \max_{i=1,\ldots,N} \rk \Loew_{s_i},
\end{equation}
and the 1D Loewner matrices $\Loew_{p_j}$ and $\Loew_{s_i}$ are defined in \eqref{Lpij} and \eqref{Lsigmailj}, respectively.
Moreover, any partitioning as in \eqref{eq:Lpart} with $k > k_*$ and $q > q_*$ yields a rational function that interpolates all function samples. In other words, one could compute a priori upper bounds for the order of the approximant by computing $M+N$ SVDs of 1D Loewner matrices and avoid constructing non-minimal interpolants when using the \paaa algorithm. Note that for large data sets this is a potentially expensive task. Instead, we propose an approach, which computes a minimal interpolant via a post-processing procedure. First, we  answer the question of how we can tell whether the output from \Cref{alg:p-AAA} is  a non-minimal interpolant or not without computing $k_*$ and $q_*$ as in \eqref{eq:minorder}. 
\begin{lemma} \label{lem:minorder}
Consider the data \eqref{eq:Lpart} and 
let the corresponding barycentric rational approximant $\Hr (s,p) $ have the form in \eqref{eq:Hrbary}. {Furthermore, for at least one $\tilde{p} \in \{p_j\}$ and $\tilde{s} \in \{ s_i \}$ satisfying $k_* = \rk \Loew_{\tilde{p}} $ and $q_* = \rk \Loew_{\tilde{s}}$, we assume that all $k_* \times k_*$ submatrices of $ \Loew_{\tilde{p}} $ and $q_* \times q_*$ submatrices of $\Loew_{\tilde{s}}$ have full rank.} Then
$$
\dim \ker \Loew_2 \geq 1 \quad \mbox{if~and~only~if} \quad k > k_* \text{ and } q > q_*.
$$
In addition, if $\dim \ker \Loew_2 \geq 1$, then $\dim \ker \Loew_2 = (k-k_*)(q-q_*)$.
\end{lemma}
\begin{proof}
We will only show the first implication $(\Rightarrow)$ and refer the reader to \cite{Io14data} for the proof of the other direction $(\Leftarrow)$. 
Let $\dim \ker \Loew_2 \geq 1$ and assume that $k \leq k_*$ or $q \leq q_*$. First, $\dim \ker \Loew_2 \geq 1$ implies that $\Hr (s,p) $ interpolates all data in \eqref{eq:Lpart} (this follows from the error formula derived in Corollary 4.3 in \cite{Io14data}). Let $\tilde{p}$ be a parameter where the first expression in \eqref{eq:minorder} attains its maximum. In other words $k_*= \rk \Loew_{\tilde{p}}$. Further, assume that all $k_* \times k_*$ submatrices of $\Loew_{\tilde{p}}$ have full rank. These conditions imply that a rational interpolant of the values $H(s_i,\tilde{p})$ for $i=1,\ldots,N$ has to be at least of order $k_*$ \cite{AntBG20}. However, $\Hr (s,\tilde{p}) $ interpolates all these points and is of order $k_*-1$ or less. A similar contradiction can be shown in the case that $q \leq q_*$ yielding that $k > k_*$ and $q > q_*$. Based on this result, we can apply Theorem 4.2. from \cite{Io14data} which implies that $\rk \Loew_2 = kq - (k-k_*)(q-q_*)$. Since $\Loew_2$ has $kq$ columns we obtain $\dim \ker \Loew_2 = (k-k_*)(q-q_*)$.
\end{proof}
\paragraph{Algorithmic implications}
\Cref{lem:minorder} reveals a connection between the nullity of $\Loew_2$ and the minimal order of an interpolant, {based on uniform rank conditions of 1D Loewner matrices that are typically satisfied in practice \cite{AntBG20}}. If $\dim \ker \Loew_2 = d = 1$ we have $k=k_*+1$ and $q=q_*+1$ and the interpolant is of minimal order. If $d>1$ it must be that $k>k_*+1$ or $q>q_*+1$ and the interpolant is of non-minimal order. Assuming that we compute the SVD of $\Loew_2$ using direct methods, $d$ is  available in each step of the algorithm without the need for additional computations. Our proposed post-processing procedure, which can be used after Line \ref{lst:line:lsq} of \Cref{alg:p-AAA} if $d>1$, is as follows: 
\begin{enumerate}
    \item If $q \leq k$ compute $q_*$ based on \eqref{eq:minorder} and if $k < q$ compute $k_*$ based on \eqref{eq:minorder}.
    \item \Cref{lem:minorder} implies that $k_*=k-d/(q-q_*)$ and $q_*=q-d/(k-k_*)$. From the first step we either obtain $k_*$ or $q_*$. In the former case we compute $q_*=q-d/(k-k_*)$ whereas in the latter case we compute $k_*=k-d/(q-q_*)$ in order to obtain the order of the minimal interpolant $(k_*,q_*)$.
    \item Update the partitioning \eqref{eq:Lpart} such that $k=k_*+1$ and $q=q_*+1$ and use it to compute $\Hr$ based on Lines \ref{lst:line:lsq} and \ref{lst:line:Hr} of \Cref{alg:p-AAA}.
\end{enumerate}
}

\end{document}